\newtheorem{theorem}{Theorem}
\newtheorem{lemma}[theorem]{Lemma}
\newtheorem{corollary}[theorem]{Corollary}
\newcommand{\papertitle}{A Flexible Gradient Tracking Algorithmic Framework for Decentralized Optimization}
\newcommand{\paperauthora}{Albert S. Berahas}
\newcommand{\paperauthoraaffiliation}{University of Michigan}
\newcommand{\paperauthorb}{Raghu Bollapragada}
\newcommand{\paperauthorbaffiliation}{University of Texas at Austin}
\newcommand{\paperauthorc}{Shagun Gupta}
\begin{document}
\title{\papertitle}

\author{\paperauthora\footnotemark[1]\ \footnotemark[3]
   \and \paperauthorb\footnotemark[2]
   \and \paperauthorc\footnotemark[2]}

\maketitle

\renewcommand{\thefootnote}{\fnsymbol{footnote}}
\footnotetext[1]{\paperauthoraaffiliation. (\url{\paperauthoraemail})}
\footnotetext[2]{\paperauthorbaffiliation. (\url{\paperauthorbemail,\paperauthorcemail})}
\footnotetext[3]{Corresponding author.}
\renewcommand{\thefootnote}{\arabic{footnote}}

\begin{abstract}{
In decentralized optimization over networks, each node in the network has a portion of the global objective function and the aim is to collectively optimize this function. Gradient tracking methods have emerged as a popular alternative for solving such problems due to their strong theoretical guarantees and robust empirical performance. These methods perform two operations (steps) at each iteration: (1) compute local gradients at each node, and (2) communicate local information with neighboring nodes. The complexity of these two steps can vary significantly across applications. In this work, we present a flexible gradient tracking algorithmic framework designed to balance the composition of communication and computation steps over the optimization process using a randomized scheme. The proposed framework is general, unifies gradient tracking methods, and recovers classical gradient tracking methods as special cases. We establish convergence guarantees in expectation and illustrate how the complexity of communication and computation steps can be balanced using the provided flexibility. Finally, we illustrate the performance of the proposed methods on quadratic and logistic regression problems, and compare against popular algorithms from the literature.

}
\end{abstract}

\section{Introduction} \label{sec.intro}

We consider decentralized optimization problems over networks. The goal is to minimize a global objective function over a network of agents (nodes), where each agent only has access to local information (local objective function) and each agent can only 
exchange information (communicate) with a subset of the other agents (neighbors). 
The nodes and edges (between neighboring nodes) form an undirected connected network, and information is exchanged along the edges. 
Due to data storage, data privacy and bandwidth consideration and limitations, such problems arise in many application areas, e.g., machine learning \cite{forero2010consensus,tsianos2012consensus}, multi-agent coordination \cite{cao2012overview, zhou2011multirobot}, sensor networks \cite{baingana2014proximal, predd2007distributed},  and signal processing \cite{combettes2011proximal, bazerque2009distributed}.

Algorithms for decentralized optimization typically 
perform two types of steps: $(1)$ a computation step (e.g., the computation of the gradient of the local objective function at each node), 
and $(2)$ a communication step (e.g.,
 the exchange of 
information among neighboring nodes). 
The complexity of these two steps can differ significantly across applications \cite{bertsekas2015parallel, mcmahan2017communication,nedic2018network}. 
For example, solving a large scale machine learning problem on a cluster of computers with shared memory access has a higher cost of computation than communication \cite{tsianos2012consensus}. 
Conversely, economical usage of communication resources is required for tasks such as optimal channel allocation \cite{magnusson2017bandwidth} or multi-agent coordination \cite{cao2012overview, zhou2011multirobot} in wireless sensor networks, due to limited power resources.
Thus, overall efficiency 
is contingent upon striking a balance between the number of communication and computation steps. In this paper, we present a randomized gradient tracking algorithmic framework (\texttt{RGTA}) that is endowed with flexibility in the composition of computation and communication steps in order to achieve this balance.

The \texttt{RGTA} framework decomposes the communication and computation steps similar to \cite{berahas2018balancing,sayed2014diffusion,chen2012fast}, and introduces flexibility via a randomized scheme that dictates the composition of these two steps. This results in a less rigid framework in comparison to deterministic approaches that restrict each iteration to the user specified number 
of communication and computation steps \cite{berahas2018balancing, berahas2023balancing, mansoori2021flexpd,chen2012fast,sayed2014diffusion}. 
\texttt{RGTA} also builds upon the unifying framework for gradient tracking methods introduced in \cite{berahas2023balancing}, a class of decentralized optimization methods shown to have sound theoretical guarantees and robust and efficient empirical performance in various decentralized settings \cite{nedic2017achieving,shi2015extra,xu2015augmented,nedic2017geometrically,sun2022distributed,berahas2023balancing, pu2020push,shah2023adaptive}. 
This unified framework in conjunction with the proposed randomized scheme allows for a 
direct theoretical and empirical comparison of popular gradient tracking methods.

\subsection{Literature Review} \label{sec.lit}

Gradient tracking algorithms, e.g., DIGing \cite{nedic2017achieving}, EXTRA \cite{shi2015extra}, AugDGM \cite{xu2015augmented}, ATC-DIGing \cite{nedic2017geometrically}, and SONATA \cite{sun2022distributed}, form one of the most popular class 
of algorithms for decentralized optimization due to their strong 
theoretical guarantees and superior empirical performance compared to alternative 
first-order algorithms. 
These methods have been 
extended to important decentralized settings, e.g., stochastic~\cite{pu2021distributed}, time varying networks \cite{nedic2017achieving}, directed networks \cite{nedic2017achieving, pu2020push}.
Moreover, variants that employ gossip-type communication strategies have also been developed~\cite{pu2020push, pu2021distributed}, where all pairs of connected nodes randomly and independently exchange information pairwise.
We note that this is different from the randomized scheme considered in this work. We consider a randomized scheme similar to that in federated learning settings \cite{mcmahan2017communication, karimireddy2020scaffold, mitra2021linear, mishchenko2022proxskip}, i.e., the decision to communicate is based on a randomized scheme and all nodes synchronously exchange information.

Motivated 
by applications, the subject of designing decentralized optimization algorithms that explicitly provide flexibility with respect to the number of communication and computation steps has gained significant attention in recent years. For example, in \cite{berahas2018balancing, 9479747, berahas2023balancing, mansoori2021flexpd,chen2012fast,berahas2019nested}, deterministic schemes were 
proposed that allow for the exact specification of the composition of the two steps. 
On the other hand, randomized schemes have also been proposed; see e.g., FedAvg \cite{mcmahan2017communication}, FedLin \cite{mitra2021linear}, Scaffold \cite{karimireddy2020scaffold}, Scaffnew \cite{mishchenko2022proxskip}, S-local-GD \cite{gorbunov2021local}. 
These methods are intended for settings in which the communication steps are expensive and aim to reduce the total rounds of communication via a randomized scheme.  
That said, they are primarily designed for fully connected networks (a network where all pairs of nodes are connected) using a client-server communication setup \cite{mcmahan2017communication,bertsekas2015parallel}. In this work, we present a randomized algorithmic framework for the general decentralized setting (the aforementioned setting is a special case), i.e., general connected undirected networks.

\subsection{Contributions} \label{sec.contri}

Our main contributions can be summarized as follows:

\begin{enumerate}
    \item We propose \texttt{RGTA}, a randomized variant of the gradient tracking algorithmic framework introduced in~\cite{berahas2023balancing} that \emph{probabilistically} balances the number of communication and computation steps in decentralized optimization. As a special case, the \texttt{RGTA} framework recovers the multiple communications and single computation setting of the deterministic framework proposed in~\cite{berahas2023balancing}. Moreover, we discuss three instances of \texttt{RGTA} that recover  
    popular gradient tracking methods, e.g., \texttt{RGTA-1} \cite{shi2015extra, nedic2017achieving}, \texttt{RGTA-2} \cite{di2016next, sun2022distributed} and \texttt{RGTA-3} \cite{nedic2017geometrically, xu2015augmented}; see \cref{tab: Algorithm_Def}.
    \item We establish a global linear rate of convergence in expectation for the \texttt{RGTA} framework under sufficient conditions on step size parameter and information exchanged in the network, and illustrate the advantages of \texttt{RGTA-3} compared to \texttt{RGTA-2} (and \texttt{RGTA-2} compared to \texttt{RGTA-1}). 
    We provide worst-case complexity results for the number of computation steps and the expected number of communication steps required to achieve an $\epsilon-$accurate solution, and quantify the trade-off between communication and computation. We show a reduction in number of computation steps by performing multiple communication steps, and reduction in expected number of communication steps by performing communication steps less often in a randomized manner. Note that due to our randomized scheme, our communication complexity results are in expectation. 
    \item We illustrate the empirical performance of \texttt{RGTA} on quadratic and binary classification logistic regression problems. We 
    demonstrate the effects and benefits of employing the randomized scheme to reduce the 
    number of communication or computation steps, as desired, in \texttt{RGTA-1}, \texttt{RGTA-2} and \texttt{RGTA-3}. We also demonstrate that our proposed randomized framework is competitive with several popular and efficient methods, e.g.,  FedAvg \cite{mcmahan2017communication}, Scaffold \cite{karimireddy2020scaffold} and Scaffnew \cite{mishchenko2022proxskip}.
\end{enumerate} 

\subsection{Paper Organization} 
The paper is organized as follows. In~\cref{sec.methods}, we formally define the problem and notation (\cref{sec.formulation}), and present our proposed randomized gradient tracking algorithmic framework (\texttt{RGTA}, \cref{sec.algorithm}). In \cref{sec.theory}, we establish a linear rate of convergence in expectation, perform a theoretical comparison among popular gradient tracking methods, and provide worst case complexity results for the number of computation and the expected number of communication steps required to achieve $\epsilon$-accurate solutions. 
We present numerical experiments on quadratic and binary classification logistic regression problems, and compare \texttt{RGTA} with popular algorithms from the literature in \cref{sec.num_exp}. Finally, we provide concluding remarks in \cref{sec.conc}.

\section{Problem Formulation and Algorithm}\label{sec.methods}
In this section, we formulate the decentralized optimization problem and define the notation used throughout the paper (\cref{sec.formulation}). Then, we describe our proposed algorithmic framework (\cref{sec.algorithm}). 

\subsection{Decentralized Optimization Problem} \label{sec.formulation}

The decentralized optimization problem considered in this paper is defined as
\begin{align}		\label{eq: prob}
	\min_{x\in \mathbb{R}^d}\quad f(x) = \frac{1}{n} \sum_{i=1}^n f_i(x),
\end{align}
where $x\in \mathbb{R}^d$ is the decision variable, $f_i: \mathbb{R}^d \rightarrow \mathbb{R}$ is the local objective function only known to node $i$ (for all $i\in \{1,2,...,n \}$), and $f: \mathbb{R}^d \rightarrow \mathbb{R}$ is the global objective function. 
The goal is to minimize the global objective function over a network of nodes, where each node only has access to local information (local objective function) and can only exchange information with neighboring nodes in the network. 

Problem~\eqref{eq: prob} is often reformulated as a consensus optimization problem by introducing a local copy of the decision variable at each node, $x_i \in \mathbb{R}^d$ (for all $i\in \{1,2,...,n \}$), and adding a consensus constraint that ensures that the decision variable at each node is the same as its neighbors, i.e., $x_i = x_j$ for all $(i, j) \in \mathcal{E}$ where $\mathcal{E}$ denotes the set of edges in the network; see e.g.,~\cite{bertsekas2015parallel,nedic2017achieving}. This reformulation can be compactly represented as,
\begin{equation}\label{eq: cons_prob_mixing}
\begin{aligned}		
	\min_{x_i \in \mathbb{R}^d}&\quad \textbf{f} (\textbf{x}) = \frac{1}{n} \sum_{i=1}^n f_i(x_i)\\
	\text{s.t.} & \quad (\textbf{W}\otimes I_d)\textbf{x} = \textbf{x}, 
\end{aligned}
\end{equation}
where $\textbf{x} \in \mathbb{R}^{nd}$ is a concatenation of all local copies $x_i \in \mathbb{R}^{d}$ (for all $i\in \{1,2,...,n \}$), $\textbf{W} \in \mathbb{R}^{n \times n}$ is a mixing matrix that captures the connectivity of the network, $I_d \in \mathbb{R}^{d \times d}$ is the identity matrix, and $\otimes$ denotes the Kronecker product. The mixing matrix $\textbf{W}$ is a symmetric, doubly-stochastic matrix with $w_{ii}>0$ and $w_{ij}>0$ ($i\neq j$) if and only if $(i, j) \in \mathcal{E}$. 
Given the network is connected, problems \eqref{eq: prob} and \eqref{eq: cons_prob_mixing} are equivalent.

Moving forward, the local copies of the decision and auxiliary variables for node $i$ at iteration $k$ are denoted by $x_{i, k} \in \mathbb{R}^d$ and $y_{i, k} \in \mathbb{R}^d$, respectively. The (component-wise) average of all local decision and  auxiliary variables are denoted by $\bar{x}_{k} = \frac{1}{n} \sum_{i=1}^n x_{i, k}$ and $\bar{y}_{k} = \frac{1}{n} \sum_{i=1}^n y_{i, k}$, respectively. Boldface lowercase letters represent the concatenation of vectors, i.e.,   
\begin{align*}
    \xmbf_{k} = 
    \begin{bmatrix}
        x_{1, k}\\
        x_{2, k}\\
        \vdots \\
        x_{n, k}
    \end{bmatrix} \in \mathbb{R}^{nd}\mbox{,} \quad
    \ymbf_{k} = 
    \begin{bmatrix}
        y_{1, k}\\
        y_{2, k}\\
        \vdots \\
        y_{n, k}
    \end{bmatrix} \in \mathbb{R}^{nd}
    \mbox{,} \quad
      \nabla \fmbf(\xmbf_{k}) = 
    \begin{bmatrix}
        \nabla f_1(x_{1, k})\\
        \nabla f_2(x_{2, k})\\
        \vdots \\
        \nabla f_n(x_{n, k})
    \end{bmatrix} \in \mathbb{R}^{nd}.
\end{align*}
The vectors $\xbb_{k}$ and $\ybb_{k}$ represent concatenations of $n$ copies of the average local decision ($\Bar{x}_{k}$) and auxiliary ($\Bar{y}_{k}$) variables, respectively. The $d$ dimensional vector of all ones is denoted by $1_d$. The spectral radius of a square matrix $A$ is $\rho(A)$. Lastly, matrix inequalities are defined component-wise.


\subsection{A Randomized Gradient Tracking Framework} \label{sec.algorithm}

In this section, we describe our proposed randomized gradient tracking algorithmic framework (\texttt{RGTA}) that provides flexibility in the number of communication and computation steps. 
The framework is built around the following generalized iterate update form for gradient tracking methods 
\begin{equation}\label{eq: general_form}
\begin{aligned}
    \xmbf_{k+1} & = \Zmbf_1 \xmbf_{k} - \alpha \Zmbf_2 \ymbf_{k} \\ 
    \ymbf_{k+1} & = \Zmbf_3 \ymbf_{k} + \Zmbf_4 (\nabla \fmbf(\xmbf_{k+1}) - \nabla \fmbf(\xmbf_{k})), 
\end{aligned}
\end{equation}
where $\alpha > 0$ is the step size, $\Zmbf_i = \Wmbf_i \otimes I_d \in R^{nd \times nd}$ and $\Wmbf_i \in R^{n \times n}$ are communication matrices, for all $i = 1, 2, 3, 4$; see \cite{berahas2023balancing}.
A communication matrix represents a network topology using all nodes and a subset of the edges in the network over which information can be exchanged. Formally, a communication matrix $\Vmbf$ is a symmetric, doubly stochastic matrix such that $v_{ii} > 0$, $v_{ij} \geq 0$ if $(i, j) \in \mathcal{E}$ and $v_{ij} = 0$ if $(i, j) \notin \mathcal{E}$. The iterate update form given in \eqref{eq: general_form} encompasses several popular gradient tracking methods. Some examples are summarized in \cref{tab: Algorithm_Def}.
\begin{table}[ht]
\centering
\caption{Special Cases of Randomized Gradient Tracking Algorithmic Framework (\texttt{RGTA}).} \label{tab: Algorithm_Def}
\begin{tabular}{l*{4}{>{\centering\arraybackslash}p{0.5cm}}cc}\toprule
\multirow{2}{*}{Method} &\multicolumn{4}{c}{Communication Matrices} & Algorithms in literature & \texttt{GTA} \cite{berahas2023balancing}\\\cmidrule{2-5}
&$\Wmbf_1$&$\Wmbf_2$&$\Wmbf_3$&$\Wmbf_4$& ($n_c = 1$, $p = 1$) & ($n_g = 1$, $p=1$ )\\\midrule
\texttt{RGTA-1} &$\Wmbf$ &$I_n$ &$\Wmbf$ &$I_n$& DIGing \cite{nedic2017achieving}, EXTRA 
\cite{shi2015extra} & \texttt{GTA-1}  \\\hdashline
\texttt{RGTA-2} &$\Wmbf$ &$\Wmbf$ &$\Wmbf$ &$I_n$ & SONATA \cite{sun2022distributed}, NEXT \cite{di2016next,pu2020push} & \texttt{GTA-2}\\\hdashline
\texttt{RGTA-3} &$\Wmbf$ &$\Wmbf$ &$\Wmbf$ &$\Wmbf$ & Aug-DGM \cite{xu2015augmented}, ATC-DIGing \cite{nedic2017geometrically} & \texttt{GTA-3}\\ 
\bottomrule
\end{tabular}
Note: $\Wmbf$ is a mixing matrix.
\end{table}

The iterate updates given in \eqref{eq: general_form} perform a single communication step and a single computation step per iteration. The \texttt{RGTA} framework is more versatile and allows for flexibility in the number of communication steps per iteration. The framework is presented in \cref{alg: Randomised}. The key idea is to allow for multiple communication steps per iteration, but to communicate less often, based on a randomized scheme. In each iteration, independently, the \texttt{RGTA} approach either performs $n_c \geq 1$ communication steps with probability $p \in (0, 1]$ or uses only local information to update the local decision and auxiliary variables with probability $1-p$. The multiple communication steps are represented by replacing $\Zmbf_i$ with $\Zmbf_i ^ {n_c} = \Wmbf_i^{n_c} \otimes I_d$ for all $i=1, 2, 3, 4$, which translates to performing $n_c$ consensus steps. Therefore, changing $p$ and $n_c$ allows one to achieve any composition of communication and computation steps in expectation.

\begin{algorithm}[H]
    \caption{\texttt{RGTA} - Randomised Gradient Tracking Algorithm}
    \textbf{Inputs :} initial point $\xmbf_0 \in \R{nd}$, step size $\alpha > 0$, communication steps $n_c \geq 1$, \\ communication probability $p \in (0, 1]$.
    \begin{algorithmic}[1]
        \State $\textbf{y}_0 \gets \nabla \textbf{f}(\textbf{x}_0)$
        \For{$k \gets 0, 1, 2$ ... }
            \State Choose a sample $\theta_k \sim \mbox{Bernouli}(p)$ ($\mathbb{P}\left[\theta_k = 1\right] = p$).
            \If{$\theta_k = 1$}
                \State $\textbf{x}_{k+1} \gets \textbf{Z}_1^{n_c} \textbf{x}_k - \alpha \, \textbf{Z}_2^{n_c} \textbf{y}_k$
                \State $\textbf{y}_{k+1} \gets \textbf{Z}_3^{n_c} \textbf{y}_k + \textbf{Z}_4^{n_c}(\nabla \textbf{f}(\textbf{x}_{k+1})  - \nabla \textbf{f}(\textbf{x}_k))$
            \Else
            \State $\textbf{x}_{k+1} \gets \textbf{x}_k - \alpha \textbf{y}_k$
            \State $\textbf{y}_{k+1} \gets \textbf{y}_k + \nabla \textbf{f}(\textbf{x}_{k+1})  - \nabla \textbf{f}(\textbf{x}_k)$
            \EndIf
        \EndFor
    \end{algorithmic}
    \label{alg: Randomised}
\end{algorithm}

\bremark 
We make the following remarks about \cref{alg: Randomised}. 
\begin{itemize}
    \item \textbf{Communications and Computations:} In each iteration, the algorithm performs one computation step (computation of local gradients, $\nabla \fmbf (\xmbf_{k+1})$; Lines 6 and 9), and $n_c$ communication steps with probability $p \in (0, 1]$, independently. Thus, in expectation, the algorithm performs $p n_c$ communication steps every iteration and $\tfrac{1}{p}$ local updates (computation steps) between iterations performing communication steps.
    \item \textbf{Step size ($\alpha>0$):} The algorithm uses a constant step size that depends on problem parameters, network parameters, the choice of $n_c$ and $p$, and the communication matrices, i.e., $\Wmbf_i$ for all $i=1, 2, 3, 4$.
    \item \textbf{Flexibility:} The algorithm has two user defined parameters ($p$ and $n_c$) that dictate the composition of communication and computation steps in expectation. 
    As a result, 
    the number of local updates
    is not constant and follows a geometric distribution with probability $p$. 
    This is different from the deterministic approaches in the literature \cite{berahas2018balancing, 9479747, berahas2023balancing, mansoori2021flexpd,chen2012fast,berahas2019nested}, that constrain each iteration to a fixed user-specified composition. Thus, our approach is more flexible than the aforementioned approaches. 
\end{itemize} 
\eremark

The choice of the gradient tracking method used within the \texttt{RGTA} framework significantly impacts the performance. The \texttt{RGTA} framework is built around the general iterate update form \eqref{eq: general_form}, and as a result, we are able to establish a unified theory for gradient tracking methods when employed with the proposed randomized scheme. This allows us to perform a direct theoretical and empirical comparison of different gradient tracking methods when employed within the \texttt{RGTA} framework. We demonstrate this for the special cases presented in Table~\ref{tab: Algorithm_Def} (\texttt{RGTA-1}, \texttt{RGTA-2} and \texttt{RGTA-3}).

\section{Convergence Analysis}\label{sec.theory}

In this section, we establish theoretical convergence guarantees in expectation
for our proposed algorithmic framework
\texttt{RGTA}. We present conditions for achieving a linear rate of convergence, and explicitly quantify the effect of both the number of communication steps ($n_c$) and the communication probability ($p$) on the rate of convergence. 
Moreover, we 
analyze the three methods described in \cref{tab: Algorithm_Def},  \texttt{RGTA-1}, \texttt{RGTA-2} and \texttt{RGTA-3}, as special cases and provide a theoretical comparison. We assume $p < 1$ in our general analysis, and state and discuss the results for the special case of $p = 1$ separately. We then provide a complexity analysis that demonstrates the effect of employing $n_c$ and $p$ parameters to achieve reduction in communication or computation complexity.
We make the following assumptions about 
the objective function.
\bassumption    \label{asum: convex_and_smooth}
    The global objective function $f : \mathbb{R}^d \rightarrow \mathbb{R}$ is $\mu$-strongly convex. Each component function $f_i : \mathbb{R}^d \rightarrow \mathbb{R}$ (for all $i \in  \{1, 2, ..., n\}$) has L-Lipschitz continuous gradients. That is, for all $z, z' \in \mathbb{R}^d$, 
    \begin{align*}
        f(z')  &\geq f(z) +  \langle \nabla f(z), z' - z \rangle + \tfrac{\mu}{2} \|z' - z\|_2^2,    \\
        \|\nabla f_i(z) - \nabla f_i(z')\|_2 &\leq L\|z - z'\|_2,    \qquad \qquad \forall \,\, i = 1, ..., n.
    \end{align*}
\eassumption

Under \cref{asum: convex_and_smooth}, \eqref{eq: prob} has unique minimizer denoted by $x^* \in \mathbb{R}^d$. We define, 
\begin{align*}
   \left\|\Wmbf^{n_c} - \frac{1_n1_n^T}{n}\right\|_2 = \beta^{n_c}, \quad \mbox{and} \quad \left\|\Wmbf_i^{n_c} - \frac{1_n1_n^T}{n}\right\|_2 = \beta_i^{n_c}, \quad \forall \,\, i = 1, 2, 3, 4,    
\end{align*}
where $\beta \in [0, 1)$ as $\Wmbf$ is a mixing matrix for a connected network and $\beta_i \in [0,1]$ (for all $i = 1, 2, 3, 4$) as $\Wmbf_i$'s are symmetric, doubly-stochastic communication matrices. Using the above definitions, and $\Zmbf^{n_c}= \Wmbf^{n_c}\otimes I_{d}$ and $\Zmbf_i^{n_c} = \Wmbf_i^{n_c}\otimes I_{d}$ for all $i = 1, 2, 3, 4$, it follows that 
\begin{align}   \label{eq: beta_and_Z}
    \|\Zmbf^{n_c} - \Imbf\|_2 = \beta^{n_c}, \quad \mbox{and} \quad  \|\Zmbf_i^{n_c} - \Imbf\|_2 = \beta_i^{n_c}, \,\, \forall \,\, i = 1, 2, 3, 4,
\end{align}
where $\Imbf = \frac{1_n1_n^T}{n} \otimes I_d$. We also define,
\begin{align}   \label{eq: derivative_terms_define}
   g_{k} = \frac{1}{n} \sum_{i = 1}^n \nabla f_i(x_{i, k}), \quad \mbox{and} 
 \quad \gbar_{k} = \frac{1}{n} \sum_{i = 1}^n \nabla f_i(\xbar_{k}), 
\end{align}
where $x_{i,k}$ is the local copy of node 
$i$ at  iteration $k$ and $\bar{x}_{k} = \frac{1}{n} \sum_{i=1}^n x_{i, k}$. 

The iterate update form of \cref{alg: Randomised} can be expressed as 
\begin{equation}\label{eq: iter_form_simple}
\begin{aligned} 
    \xmbf_{k+1} &= \Rmbf_1^{n_c} \xmbf_{k} - \alpha  \Rmbf_2^{n_c} \ymbf_{k}, \\
    \ymbf_{k+1} &= \Rmbf_3^{n_c} \ymbf_{k} + \Rmbf_4^{n_c} \left(\nabla \fmbf(\xmbf_{k+1}) - \nabla \fmbf(\xmbf_{k})\right),
\end{aligned}
\end{equation}
where $\Rmbf_i^{n_c} = \Smbf_i^{n_c} \otimes I_d \in \mathbb{R}^{nd \times nd}$ and $\Smbf_i \in \mathbb{R}^{n\times n}$ is a random matrix for all $i = 1, 2, 3, 4$. The random matrices $\Smbf_i$ are defined as
\begin{align*}
   (\Smbf_1, \Smbf_2, \Smbf_3, \Smbf_4)  = 
  \begin{cases}
    (\Wmbf_1, \Wmbf_2, \Wmbf_3, \Wmbf_4) & \text{with probability} \quad p, \\
    (I_n, I_n, I_n, I_n) & \text{with probability} \quad 1 - p. 
  \end{cases}
\end{align*}
Thus, these matrices are always symmetric and doubly stochastic. By \eqref{eq: beta_and_Z} and $\left\|I_{nd} - \Imbf\right\|_2 = 1$, it follows for all $i=1, 2, 3, 4$, 
\begin{align}   \label{eq: R_expected_norm}
    \E \left[\left\|\Rmbf_i^{n_c} - \Imbf\right\|_2\right] = p \left\|\Zmbf_i^{n_c} - \Imbf\right\|_2 + (1-p)\left\|I_{nd} - \Imbf\right\|_2 = p \beta_i^{n_c} + (1-p).
\end{align}

For the analysis, we define the expected error vector
\begin{align} \label{eq: error_vector_def}
    r_k = \E \begin{bmatrix}
        \|\xbar_{k} - x^*\|_2\\
        \|\xmbf_{k} - \Bar{\xmbf}_{k}\|_2\\
        \|\ymbf_{k} - \Bar{\ymbf}_{k}\|_2\\
    \end{bmatrix}, \quad \forall \,\, k \geq 0,
\end{align}
which combines the expected optimization error $\E[\|\xbar_{k} - x^*\|_2]$ and the expected consensus errors, $\E[\|\xmbf_k - \xbb_k\|_2]$ and $\E[\|\ymbf_k - \ybb_k\|_2]$, at iteration $k$. We show convergence in expectation by establishing conditions for a monotonic decrease in the norm of the expected error vector \eqref{eq: error_vector_def}. 
We first establish a technical lemma that quantifies the progression of $r_k$. 

\begin{lemma}\label{lem: lyapunov}
    Suppose \cref{asum: convex_and_smooth} holds and $\alpha \leq \frac{1}{L}$
    . Then, for all $k\geq 0$,
    \begin{align}\label{eq: general_r}
        r_{k+1} \leq A(n_c, p) r_k,
    \end{align}
    \begin{align} \label{eq: general_A}
    \mbox{where}\qquad A(n_c, p) = \begin{bmatrix}
        1 - \alpha \mu & \frac{\alpha L}{\sqrt{n}} & 0\\
        0 & \eta_1 & \alpha\eta_2\\
        \sqrt{n} \eta_4 \alpha  L^2 & p\beta_4^{n_c}L\|\Zmbf_1^{n_c}-I_{nd}\|_2 + \eta_4 \alpha L^2  & \eta_3 + \eta_4 \alpha L\\
        \end{bmatrix}
    \end{align}
    and $\eta_i = p\beta_i^{n_c} + (1 - p)$ for all $i = 1, 2, 3, 4$.
\end{lemma}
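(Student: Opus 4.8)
The plan is to derive the three rows of \eqref{eq: general_A} one at a time, by propagating the optimization error $\E[\|\xbar_k - x^*\|_2]$, the decision consensus error $\E[\|\xmbf_k - \Bar{\xmbf}_k\|_2]$, and the auxiliary consensus error $\E[\|\ymbf_k - \Bar{\ymbf}_k\|_2]$ through \eqref{eq: iter_form_simple}. First I would record the structural facts that make the averages behave well. Since each $\Smbf_i$ is symmetric and doubly stochastic, so is $\Smbf_i^{n_c}$, giving $\Rmbf_i^{n_c}\Imbf = \Imbf\Rmbf_i^{n_c} = \Imbf$, $\|\Rmbf_i^{n_c}\|_2 \le 1$, and the key identity $(I_{nd}-\Imbf)\Rmbf_i^{n_c} = (\Rmbf_i^{n_c}-\Imbf)(I_{nd}-\Imbf)$. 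Multiplying the updates by $\tfrac{1_n^T}{n}\otimes I_d$ then yields the average recursions $\xbar_{k+1} = \xbar_k - \alpha\ybar_k$ and $\ybar_{k+1} = \ybar_k + g_{k+1} - g_k$, from which the gradient-tracking identity $\ybar_k = g_k$ follows by induction using $\ymbf_0 = \nabla\fmbf(\xmbf_0)$ and \eqref{eq: derivative_terms_define}.

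For the first row, I would write $\xbar_{k+1}-x^* = \big(\xbar_k - x^* - \alpha\nabla f(\xbar_k)\big) + \alpha(\gbar_k - g_k)$, using $\ybar_k = g_k$ and $\nabla f(\xbar_k)=\gbar_k$. The first bracket contracts at rate $1-\alpha\mu$ by the standard strong-convexity/smoothness argument for $f$ when $\alpha\le \tfrac1L$, while $\|\gbar_k - g_k\|_2 \le \tfrac{L}{\sqrt n}\|\xmbf_k-\Bar{\xmbf}_k\|_2$ by $L$-smoothness and Cauchy--Schwarz. Taking expectations gives the row $[\,1-\alpha\mu,\ \alpha L/\sqrt n,\ 0\,]$. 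For the second row, projecting the $\xmbf$-update with $I_{nd}-\Imbf$ and applying the identity above gives $\xmbf_{k+1}-\Bar{\xmbf}_{k+1} = (\Rmbf_1^{n_c}-\Imbf)(\xmbf_k-\Bar{\xmbf}_k) - \alpha(\Rmbf_2^{n_c}-\Imbf)(\ymbf_k-\Bar{\ymbf}_k)$, so that $\|\xmbf_{k+1}-\Bar{\xmbf}_{k+1}\|_2 \le \|\Rmbf_1^{n_c}-\Imbf\|_2\|\xmbf_k-\Bar{\xmbf}_k\|_2 + \alpha\|\Rmbf_2^{n_c}-\Imbf\|_2\|\ymbf_k-\Bar{\ymbf}_k\|_2$. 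Because $\theta_k$ is independent of $(\xmbf_k,\ymbf_k)$, taking expectations and using \eqref{eq: R_expected_norm} factorizes the norms, producing the row $[\,0,\ \eta_1,\ \alpha\eta_2\,]$.

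The third row is the delicate one. Projecting the $\ymbf$-update gives $\ymbf_{k+1}-\Bar{\ymbf}_{k+1} = (\Rmbf_3^{n_c}-\Imbf)(\ymbf_k-\Bar{\ymbf}_k) + (I_{nd}-\Imbf)\Rmbf_4^{n_c}\big(\nabla\fmbf(\xmbf_{k+1})-\nabla\fmbf(\xmbf_k)\big)$; the first term contributes $\eta_3$ to entry $(3,3)$ exactly as above. For the second term I would use the identity to get $\|(I_{nd}-\Imbf)\Rmbf_4^{n_c}(\nabla\fmbf(\xmbf_{k+1})-\nabla\fmbf(\xmbf_k))\|_2 \le \|\Rmbf_4^{n_c}-\Imbf\|_2\,L\,\|\xmbf_{k+1}-\xmbf_k\|_2$, and then expand $\xmbf_{k+1}-\xmbf_k = (\Rmbf_1^{n_c}-I_{nd})(\xmbf_k-\Bar{\xmbf}_k) - \alpha\Rmbf_2^{n_c}(\ymbf_k-\Bar{\ymbf}_k) - \alpha\Bar{\ymbf}_k$, where $\|\Bar{\ymbf}_k\|_2 = \sqrt n\,\|g_k\|_2 \le \sqrt n\,L\|\xbar_k-x^*\|_2 + L\|\xmbf_k-\Bar{\xmbf}_k\|_2$ (using $g_k = \gbar_k+(g_k-\gbar_k)$, $\nabla f(x^*)=0$, and the smoothness bounds already established).

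The hard part is that $\Rmbf_4^{n_c}$ and $\Rmbf_1^{n_c}$ are generated by the \emph{same} Bernoulli draw $\theta_k$, so the product $\|\Rmbf_4^{n_c}-\Imbf\|_2\,\|\Rmbf_1^{n_c}-I_{nd}\|_2$ cannot be handled as a product of independent expectations. Conditioning on $\theta_k$, this product equals $\beta_4^{n_c}\|\Zmbf_1^{n_c}-I_{nd}\|_2$ with probability $p$ and $1\cdot 0 = 0$ with probability $1-p$, so its expectation is $p\beta_4^{n_c}\|\Zmbf_1^{n_c}-I_{nd}\|_2$ --- precisely the first summand of entry $(3,2)$. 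The remaining pieces decorrelate cleanly: bounding $\|\Rmbf_2^{n_c}\|_2\le 1$ deterministically separates $\Rmbf_4^{n_c}$ from the $\alpha(\ymbf_k-\Bar{\ymbf}_k)$ term (yielding $\eta_4\alpha L$ in entry $(3,3)$), and $\Bar{\ymbf}_k$ depends only on $\theta_0,\dots,\theta_{k-1}$, hence is independent of $\Rmbf_4^{n_c}$ (yielding $\sqrt n\,\eta_4\alpha L^2$ in entry $(3,1)$ and $\eta_4\alpha L^2$ in the remaining part of entry $(3,2)$). Collecting these terms and taking expectations assembles the third row, and hence $A(n_c,p)$. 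I expect the only genuine obstacle to be the careful bookkeeping of which factors are deterministic, which are independent of $\theta_k$, and which are co-drawn with $\theta_k$; the rest is routine consensus and gradient-tracking algebra.
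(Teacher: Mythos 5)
Your proposal is correct and follows essentially the same route as the paper: the same average-iterate dynamics and tracking identity $\bar{y}_k = g_k$, the same row-by-row bounds on the optimization and two consensus errors, and the same key observation that the co-drawn pair $\Rmbf_1^{n_c},\Rmbf_4^{n_c}$ must be handled jointly by conditioning on $\theta_k$ (giving $p\beta_4^{n_c}\|\Zmbf_1^{n_c}-I_{nd}\|_2$), while the remaining factors either admit the deterministic bound $\|\Rmbf_2^{n_c}\|_2\le 1$ or are independent of $\theta_k$. The only differences are cosmetic (projection identities versus the paper's add-and-subtract of $\Imbf$, and simplifying $\Rmbf_2^{n_c}\Bar{\ymbf}_k=\Bar{\ymbf}_k$ directly), so the argument matches the paper's proof.
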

\bproof
By \eqref{eq: iter_form_simple}, since $\Smbf_i$ is a doubly stochastic matrix for all $i=1, 2, 3, 4$, the average iterates can be expressed as,
\begin{align*}
    \Bar{x}_{k+1} & = \Bar{x}_k - \alpha \Bar{y}_k,   \\
    \Bar{y}_{k+1} & = \Bar{y}_k + g_{k+1} - g_{k}, 
\end{align*}
where $g_k$ is defined in \eqref{eq: derivative_terms_define}.
Taking the telescopic sum of $\Bar{y}_{i}$ from $i=0$ to $k$ with $\bar{y}_0 = g_0$ yields,
\begin{align}
    \Bar{y}_{k} & = g_k. \label{eq: y_bar_telescope}
\end{align}

We first consider the optimization error of the average iterates. That is, 
\begin{align*}
    \|\Bar{x}_{k+1} - x^*\|_2 & = \left\|\Bar{x}_k - \alpha \Bar{y}_k + \alpha \gbar_k - \alpha\gbar_k - x^*\right\|_2  \\
    & \leq \left\|\Bar{x}_k- \alpha\gbar_k - x^*\right\|_2 + \alpha \left\|\Bar{y}_k - \gbar_k\right\|_2  \\
    &\leq  (1-\alpha \mu) \|\Bar{x}_k - x^*\|_2 + \alpha \left\|g_k - \gbar_k\right\|_2 \\
    &= (1-\alpha \mu) \|\Bar{x}_k - x^*\|_2 + \frac{\alpha}{n}\left\|\sum_{i=1}^n \nabla f_i(x_{i, k}) - \nabla f_i(\Bar{x}_{k})\right\|_2 \\
    & \leq (1-\alpha \mu) \|\Bar{x}_k - x^*\|_2 + \frac{\alpha L}{n} \sum_{i=1}^n  \| x_{i, k} - \Bar{x}_{k}\|_2    \\
    & \leq (1-\alpha \mu) \|\Bar{x}_k - x^*\|_2 + \frac{\alpha L}{\sqrt{n}}  \| \xmbf_{k} - \Bar{\xmbf}_{k}\|_2,    \numberthis \label{eq : opt_bound_before_expectation}
\end{align*}
where the first inequality is due to the triangle inequality, the second inequality is obtained by performing one gradient descent iteration on the function $f$ under \cref{asum: convex_and_smooth} at the average iterate $\bar{x}_k$ with $\alpha \leq \frac{1}{L}$ \cite[Theorem 2.1.14]{nesterov1998introductory} and substituting \eqref{eq: y_bar_telescope}, the equality follows from \eqref{eq: derivative_terms_define}, the second to last inequality follows from \cref{asum: convex_and_smooth}, and the last inequality is due to $\sum_{i=1}^n \|x_{i, k} - \xbar_k\|_2 \leq \sqrt{n}\|\xmbf_{k} - \Bar{\xmbf}_{k}\|_2$. 
By \eqref{eq : opt_bound_before_expectation}, the optimization error can be bounded in expectation as,
\begin{align}
    \E\left[ \|\Bar{x}_{k+1} - x^*\|_2 \right] \leq (1-\alpha \mu) \E \left[ \|\Bar{x}_k - x^*\|_2 \right] + \frac{\alpha L}{\sqrt{n}}  \E \left[ \| \xmbf_{k} - \Bar{\xmbf}_{k}\|_2 \right].   \label{eq : opt_bound}
\end{align}

Next, we consider the consensus error in $\xmbf_k$,
\begin{align*}
    \xmbf_{k+1} - \Bar{\xmbf}_{k+1} &= \Rmbf_1^{n_c}\xmbf_k - \Bar{\xmbf}_{k} - \alpha \Rmbf_2^{n_c}\ymbf_k + \alpha \Bar{\ymbf}_k \\
    & = \Rmbf_1^{n_c}\xmbf_k - \Rmbf_1^{n_c}\Bar{\xmbf}_{k} - \alpha \Rmbf_2^{n_c}\ymbf_k + \alpha \Rmbf_2^{n_c}\Bar{\ymbf}_k  - \Imbf (\xmbf_k - \Bar{\xmbf}_k) + \Imbf (\ymbf_k - \Bar{\ymbf}_k)\\
    & = \left(\Rmbf_1^{n_c} - \Imbf\right)(\xmbf_k - \Bar{\xmbf}_k) - \alpha\left(\Rmbf_2^{n_c} - \Imbf\right)(\ymbf_k - \Bar{\ymbf}_k), 
\end{align*}
where the second equality follows from the fact that $\Smbf_1$ and $\Smbf_2$ are doubly stochastic matrices and adding $-\Imbf(\xmbf_k - \xbb_k) = 0$ and $-\Imbf(\ymbf_k - \ybb_k) = 0$. By the triangle inequality, 
\begin{align}\label{eq : g = 1 x con error} 
    \|\xmbf_{k+1} - \Bar{\xmbf}_{k+1}\|_2 & \leq \left\|\Rmbf_1^{n_c} - \Imbf\right\|_2 \|\xmbf_k - \Bar{\xmbf}_k\|_2 + \alpha \left\|\Rmbf_2^{n_c} - \Imbf\right\|_2 \|\ymbf_k - \Bar{\ymbf}_k\|_2.  
\end{align}
The conditional expectation given $\xmbf_k$ and $\ymbf_k$ of the consensus error in \eqref{eq : g = 1 x con error} can be bounded as, 
\begin{align*}
    \E \left[ \|\xmbf_{k+1} - \Bar{\xmbf}_{k+1}\|_2 | \xmbf_k, \ymbf_k \right] & \leq \E \left[\left\|\Rmbf_1^{n_c} - \Imbf\right\|_2 | \xmbf_k, \ymbf_k \right] \|\xmbf_k - \Bar{\xmbf}_k\|_2\ \\
    & \quad + \alpha \E\left[\left\|\Rmbf_2^{n_c} - \Imbf\right\|_2| \xmbf_k, \ymbf_k \right] \|\ymbf_k - \Bar{\ymbf}_k\|_2   \\
    &= \eta_1 \|\xmbf_k - \Bar{\xmbf}_k\|_2 + \alpha \eta_2 \|\ymbf_k - \Bar{\ymbf}_k\|_2,
\end{align*}
where the equality follows from \eqref{eq: R_expected_norm} as $\Rmbf_1$ and $\Rmbf_2$ are independent of $\xmbf_k$ and $\ymbf_k$. 
Taking the total expectation of the above (with respect to all the random variables starting with $\xmbf_0, \ymbf_0$) yields, 
\begin{align*}
    \E \left[ \|\xmbf_{k+1} - \Bar{\xmbf}_{k+1}\|_2\right] & \leq \eta_1 \E\left[\|\xmbf_k - \Bar{\xmbf}_k\|_2\right] + \alpha \eta_2 \E\left[\|\ymbf_k - \Bar{\ymbf}_k\|_2\right]. \numberthis \label{eq: con_x_bound}
\end{align*}

Finally, we consider the consensus error in $\ymbf_k$. That is,
\begin{align*}
    &\left\|\ymbf_{k+1} - \Bar{\ymbf}_{k+1}\right\|_2 \\
    =& \left\|\Rmbf_3^{n_c}\ymbf_k - \Bar{\ymbf}_{k} + \Rmbf_4^{n_c} (\nabla \fmbf(\xmbf_{k+1}) - \nabla \fmbf(\xmbf_{k})) - \Imbf (\nabla \fmbf(\xmbf_{k+1}) - \nabla \fmbf(\xmbf_{k}))\right\|_2 \\
    \leq & \left\|\Rmbf_3^{n_c} - \Imbf\right\|_2 \left\|\ymbf_k - \Bar{\ymbf}_k\right\|_2 + \left\|\Rmbf_4^{n_c} - \Imbf\right\|_2\left\|\nabla \fmbf(\xmbf_{k+1}) - \nabla \fmbf(\xmbf_{k})\right\|_2, \numberthis \label{eq: y_bar_bnd_twoterm}
\end{align*}
where the equality follows since 
$-\Imbf(\ymbf_k - \ybb_k) = 0$, and the inequality is due to the  triangle inequality.  
The term $\left\|\nabla \fmbf(\xmbf_{k+1}) - \nabla \fmbf(\xmbf_{k}) \right\|_2$ can be bounded as follows,
\begin{align*}
&\left\|\nabla \fmbf(\xmbf_{k+1}) - \nabla \fmbf(\xmbf_{k}) \right\|_2  \\
\leq& L\|\xmbf_{k+1} - \xmbf_{k}\|_2 \\
=& L\|\Rmbf_1^{n_c}\xmbf_{k} - \alpha \Rmbf_2^{n_c}\ymbf_k - \xmbf_{k}\|_2   \\
=& L\|(\Rmbf_1^{n_c}-I_{nd})\xmbf_{k} - (\Rmbf_1^{n_c} - I_{nd})\Bar{\xmbf}_k  - \alpha \Rmbf_2^{n_c}\ymbf_k\|_2 \\
=& L\|(\Rmbf_1^{n_c}-I_{nd})(\xmbf_{k} - \Bar{\xmbf}_k) - \alpha \Rmbf_2^{n_c}\ymbf_k\|_2 \\
\leq& L\|\Rmbf_1^{n_c}-I_{nd}\|_2\|\xmbf_{k} - \Bar{\xmbf}_k\|_2 + \alpha L\|\Rmbf_2^{n_c}\|_2\|\ymbf_k + \Bar{\ymbf}_k - \Bar{\ymbf}_k\|_2   \\
\leq& L\|\Rmbf_1^{n_c}-I_{nd}\|_2\|\xmbf_{k} - \Bar{\xmbf}_k\|_2 + \alpha L\|\Rmbf_2^{n_c}\|_2\|\ymbf_k - \Bar{\ymbf}_k\|_2 + \alpha L\|\Rmbf_2^{n_c}\|_2\left\|\Bar{\ymbf}_k\right\|_2, \numberthis \label{eq: grad_diff_bnd}
\end{align*}
where the first inequality is due to \cref{asum: convex_and_smooth}, the first equality is due to the iterate update form in \eqref{eq: iter_form_simple}, the second equality is due to the fact that $-(\Rmbf_1^{n_c} - I_{nd})\xbb_k = 0$, and the last two inequalities are due to applications of the triangle inequality.
Next, we bound the term $\|\ybb_k\|_2$. By \eqref{eq: derivative_terms_define}, \cref{asum: convex_and_smooth} and $\sum_{i=1}^n \|x_{i, k} - \xbar_k\|_2 \leq \sqrt{n}\|\xmbf_{k} - \Bar{\xmbf}_{k}\|_2$,
\begin{align*}
\left\|\Bar{\ymbf}_k\right\|_2 & \leq \sqrt{n} \|\bar{y}_k\|_2 = \sqrt{n} \|g_k\|_2 \\
&\leq \sqrt{n}\left\|\frac{1}{n} \sum_{i = 1}^n \nabla f_i(x_{i, k}) -  \frac{1}{n} \sum_{i = 1}^n \nabla f_i(\bar{x}_{k})\right\|_2 + \sqrt{n}\left\|\frac{1}{n} \sum_{i = 1}^n \nabla f_i(\bar{x}_{k})\right\|_2  \\
&= \frac{1}{\sqrt{n}}\left\| \sum_{i = 1}^n \nabla f_i(x_{i, k}) -  \sum_{i = 1}^n \nabla f_i(\bar{x}_{k})\right\|_2 + \frac{1}{\sqrt{n}} \left\| \sum_{i = 1}^n \nabla f_i(\bar{x}_{k}) - \sum_{i = 1}^n \nabla f_i(x^*)\right\|_2 \\
&\leq L\left\|\xmbf_k - \bar{\xmbf}_k\right\|_2 + L\sqrt{n} \| \Bar{x}_{k} - x^*\|_2 \numberthis \label{eq : y_bar_bound}.
\end{align*}
Thus, by \eqref{eq: y_bar_bnd_twoterm}, \eqref{eq: grad_diff_bnd} and \eqref{eq : y_bar_bound},
\begin{align*} 
    \left\|\ymbf_{k+1} - \Bar{\ymbf}_{k+1}\right\|_2 
     \leq &\left( \left\|\Rmbf_3^{n_c} - \Imbf\right\|_2 + \alpha L \left\|\Rmbf_4^{n_c} - \Imbf\right\|_2 \|\Rmbf_2^{n_c}\|_2\right)\left\|\ymbf_k - \Bar{\ymbf}_k\right\|_2 \\
    & + \left( L \left\|\Rmbf_4^{n_c} - \Imbf\right\|_2 \|\Rmbf_1^{n_c}-I_{nd}\|_2 + \alpha L^2\left\|\Rmbf_4^{n_c} - \Imbf\right\|_2 \|\Rmbf_2^{n_c}\|_2\right) \|\xmbf_{k} - \Bar{\xmbf}_k\|_2 \\
    &  + \alpha L^2 \left\|\Rmbf_4^{n_c} - \Imbf\right\|_2 \|\Rmbf_2^{n_c}\|_2 \sqrt{n} \| \Bar{x}_{k} - x^*\|_2. 
\end{align*}
The conditional expectation given $\xmbf_k$ and $\ymbf_k$ of the consensus error above can be bounded as, 
\begin{align*} 
    &\E\left[\left\|\ymbf_{k+1} - \Bar{\ymbf}_{k+1} \right\|_2 | \xmbf_k, \ymbf_k\right] \\ 
     \leq & \E\left[ \left\|\Rmbf_3^{n_c} - \Imbf\right\|_2 + \alpha L \left\|\Rmbf_4^{n_c} - \Imbf\right\|_2 \|\Rmbf_2^{n_c}\|_2 | \xmbf_k, \ymbf_k \right] \left\|\ymbf_k - \Bar{\ymbf}_k\right\|_2 \numberthis \label{eq: y_con_before_expectation}\\
    & + \E\left[ L \left\|\Rmbf_4^{n_c} - \Imbf\right\|_2 \|\Rmbf_1^{n_c}-I_{nd}\|_2 + \alpha L^2\left\|\Rmbf_4^{n_c} - \Imbf\right\|_2 \|\Rmbf_2^{n_c}\|_2 | \xmbf_k, \ymbf_k \right] \|\xmbf_{k} - \Bar{\xmbf}_k\|_2 \\
    &  + \alpha L^2 \E\left[\left\|\Rmbf_4^{n_c} - \Imbf\right\|_2 \|\Rmbf_2^{n_c}\|_2 | \xmbf_k, \ymbf_k \right] \sqrt{n} \| \Bar{x}_{k} - x^*\|_2.  
\end{align*}
The matrices $\Rmbf_1$, $\Rmbf_2$, $\Rmbf_3$ and $\Rmbf_4$ are independent of $\xmbf_k$ and $\ymbf_k$, and as such the expressions above can be simplified using
\begin{align*}
    \E[ \left\|\Rmbf_4^{n_c} - \Imbf\right\|_2 \|\Rmbf_2^{n_c}\|_2 | \xmbf_k, \ymbf_k ] &=  p \beta_4^{n_c} \|Z_2^{n_c}\|_2 + (1-p) \left\|I_{nd} - \Imbf\right\|_2 \|I_{nd}\|_2 = \eta_4, \\
    \E[ \left\|\Rmbf_4^{n_c} - \Imbf\right\|_2 \|\Rmbf_1^{n_c}-I_{nd}\|_2 | \xmbf_k, \ymbf_k ] &=  p\beta_4^{n_c} \|\Zmbf_1^{n_c}-I_{nd}\|_2 + (1-p)\left\|I_{nd} - \Imbf\right\|_2 \|I_{nd} - I_{nd}\|_2 \\
    & = p\beta_4^{n_c} \|\Zmbf_1^{n_c}-I_{nd}\|_2.
\end{align*}
Substituting \eqref{eq: R_expected_norm} and the above expressions in \eqref{eq: y_con_before_expectation}, and taking the total expectation of the above (with respect to all the random variables starting with $\xmbf_0, \ymbf_0$) yields, 
\begin{align*}  
    \E[\left\|\ymbf_{k+1} - \Bar{\ymbf}_{k+1}\right\|_2 ] 
    & \leq \alpha L^2 \eta_4 \sqrt{n} \E[\| \Bar{x}_{k} - x^*\|_2] + \left( \eta_3 + \alpha L \eta_4 \right) \E[\left\|\ymbf_k - \Bar{\ymbf}_k\right\|_2] \numberthis \label{eq: con_y_bound} \\
    &\quad + \left( p\beta_4^{n_c} L\|\Zmbf_1^{n_c}-I_{nd}\|_2 + \alpha L^2\eta_4 \right) \E[\|\xmbf_{k} - \Bar{\xmbf}_k\|_2].
\end{align*}
Combining \eqref{eq : opt_bound}, \eqref{eq: con_x_bound} and \eqref{eq: con_y_bound} yields the desired form in \eqref{eq: general_r}--\eqref{eq: general_A}.
\eproof

Using \cref{lem: lyapunov}, we now provide the explicit form of the matrix $A(n_c, p)$ for the methods described in \cref{tab: Algorithm_Def}.

\begin{corollary} \label{col: lypanov}
Suppose \cref{asum: convex_and_smooth} holds and $\alpha \leq \frac{1}{L}$.
Then, the matrices $A({n_c, p})$ for the methods described in \cref{tab: Algorithm_Def} are defined as:
\begin{align*}
    \mbox{\texttt{RGTA-1: }} & \quad A_1({n_c, p}) = \begin{bmatrix}
            1 - \alpha \mu & \frac{\alpha L}{\sqrt{n}} & 0\\
            0 & \eta & \alpha\\
            \sqrt{n}\alpha L^2 & L(2p + \alpha L) & \eta + \alpha L\\
        \end{bmatrix},\\
    \mbox{\texttt{RGTA-2: }} & \quad A_2({n_c, p})  = \begin{bmatrix}
            1 - \alpha \mu & \frac{\alpha L}{\sqrt{n}} & 0\\
            0 & \eta & \alpha \eta\\
            \sqrt{n}\alpha L^2 & L(2p + \alpha L) & \eta + \alpha L\\
        \end{bmatrix},   \numberthis \label{eq: A_matrices}\\
    \mbox{\texttt{RGTA-3: }} & \quad A_3({n_c, p}) = \begin{bmatrix}
            1 - \alpha \mu & \frac{\alpha L}{\sqrt{n}} & 0\\
            0 & \eta & \alpha \eta\\
            \eta \sqrt{n}\alpha L^2 & L(2\beta^{n_c}p + \eta\alpha L) & \eta(1 + \alpha L)\\
        \end{bmatrix},
\end{align*}
where $\eta = p\beta^{n_c} + (1-p)$.
\end{corollary}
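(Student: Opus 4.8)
The plan is to obtain each of $A_1(n_c,p)$, $A_2(n_c,p)$, $A_3(n_c,p)$ directly from the general bound matrix $A(n_c,p)$ of \cref{lem: lyapunov} by specializing the communication matrices $\Wmbf_1,\dots,\Wmbf_4$ according to \cref{tab: Algorithm_Def} and simplifying the resulting quantities $\beta_i^{n_c}$ and $\eta_i = p\beta_i^{n_c} + (1-p)$. The only structural observation needed beyond \cref{lem: lyapunov} is that the recursion $r_{k+1} \le A(n_c,p)\, r_k$ involves a componentwise-nonnegative matrix acting on the nonnegative error vector $r_k$ (matrix inequalities being componentwise, as fixed in \cref{sec.formulation}). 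Consequently, replacing any entry of $A(n_c,p)$ by a larger quantity preserves the inequality, i.e.\ $r_{k+1} \le A(n_c,p)\, r_k \le A_j(n_c,p)\, r_k$; this licenses the use of upper bounds on individual entries, which is exactly what produces the clean closed forms in \eqref{eq: A_matrices}.

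First I would record the two elementary evaluations of $\beta_i^{n_c}$. When $\Wmbf_i = \Wmbf$ is the mixing matrix, $\beta_i^{n_c} = \beta^{n_c}$ by definition, so $\eta_i = \eta = p\beta^{n_c} + (1-p)$. When $\Wmbf_i = I_n$, we have $\Wmbf_i^{n_c} = I_n$ and hence $\beta_i^{n_c} = \|I_n - \tfrac{1_n 1_n^T}{n}\|_2 = 1$, since $I_n - \tfrac{1_n1_n^T}{n}$ is the orthogonal projection onto $1_n^\perp$ and has unit spectral norm for $n \ge 2$, giving $\eta_i = 1$. Substituting these into the diagonal and into the $(2,3)$, $(3,1)$, $(3,3)$ entries of $A(n_c,p)$ immediately yields the corresponding entries of $A_1$, $A_2$, $A_3$: for instance $\eta_2 = 1$ for \texttt{RGTA-1} makes the $(2,3)$ entry $\alpha$, while $\eta_2 = \eta$ for \texttt{RGTA-2} and \texttt{RGTA-3} makes it $\alpha\eta$, and similarly $\eta_4 = 1$ for \texttt{RGTA-1}/\texttt{RGTA-2} versus $\eta_4 = \eta$ for \texttt{RGTA-3} accounts for the prefactor of $\sqrt{n}\alpha L^2$ in the $(3,1)$ entry and the $\alpha L$ term in the $(3,3)$ entry.

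The one entry requiring an inequality is the $(3,2)$ position, $p\beta_4^{n_c} L\|\Zmbf_1^{n_c}-I_{nd}\|_2 + \eta_4 \alpha L^2$. Here I would bound $\|\Zmbf_1^{n_c}-I_{nd}\|_2 = \|\Wmbf^{n_c}-I_n\|_2 \le \|\Wmbf^{n_c}-\tfrac{1_n1_n^T}{n}\|_2 + \|I_n - \tfrac{1_n1_n^T}{n}\|_2 = \beta^{n_c} + 1 \le 2$, using $\Zmbf_1^{n_c} = \Wmbf^{n_c}\otimes I_d$, the triangle inequality, and $\beta^{n_c} < 1$. Combined with $\beta_4^{n_c} = 1$ this produces $L(2p + \alpha L)$ for \texttt{RGTA-1}/\texttt{RGTA-2}, and combined with $\beta_4^{n_c} = \beta^{n_c}$, $\eta_4 = \eta$ it produces $L(2\beta^{n_c} p + \eta\alpha L)$ for \texttt{RGTA-3}. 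I expect this $(3,2)$ entry to be the only substantive step: every other entry is a direct substitution, whereas here one must both invoke the spectral-norm bound $\|\Wmbf^{n_c}-I_n\|_2 \le 2$ and rely on the componentwise-monotonicity observation above to justify replacing the exact coefficient by its upper bound without invalidating the recursion of \cref{lem: lyapunov}.
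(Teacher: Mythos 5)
Your proposal is correct and matches the paper's proof, which likewise obtains each $A_i(n_c,p)$ by substituting the communication matrices from \cref{tab: Algorithm_Def} into \eqref{eq: general_A} and applying the bound $\|\Zmbf_1^{n_c} - I_{nd}\|_2 \leq 2$. Your write-up simply supplies the details the paper leaves implicit (the evaluation $\beta_i^{n_c}=1$ when $\Wmbf_i = I_n$, the triangle-inequality derivation of the bound $2$, and the componentwise-monotonicity justification for replacing entries by upper bounds), all of which are accurate.
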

\begin{proof}
Substituting the communication matrices for each method in \eqref{eq: general_A} and using $\|\Zmbf_1^{n_c} - I_{nd}\|_2 \leq 2$ gives the desired result. 
\end{proof}

We analyze the convergence properties of \texttt{RGTA} using the spectral radius of the matrix $A(n_c, p)$, i.e., $\rho(A(n_c, p))$, as it presents an upper bound on the matrix norm. We first qualitatively establish the effect of the number of communication steps ($n_c$), 
and the relative performance of \texttt{RGTA-1}, \texttt{RGTA-2} and \texttt{RGTA-3}.

\btheorem   \label{th.incr_rates}
Suppose \cref{asum: convex_and_smooth} holds and $\alpha \leq \frac{1}{L}$ in \cref{alg: Randomised}. Then, with an increase in $n_c$, $\rho(A(n_c, p))$ decreases, where $A(n_c, p)$ is defined in  \cref{lem: lyapunov}. Thus, as $n_c$ increases, $\rho(A_i(n_c, p))$ decreases for all $i =1, 2, 3$ defined in \cref{eq: A_matrices}. Moreover, if all three methods defined in \cref{tab: Algorithm_Def} (\texttt{RGTA-1}, \texttt{RGTA-2} and \texttt{RGTA-3}) employ the same step size, then, $\rho(A_1({n_c, p})) \geq \rho(A_2({n_c, p})) \geq \rho(A_3({n_c, p}))$.
\etheorem

\bproof
Note that $A(n_c, p) \geq 0$ and $A(n_c, p) \geq A(n_c + 1, p)$.  By \cite[Corollary 8.1.19]{horn2012matrix}, it follows that $\rho(A(n_c, p)) \geq \rho(A(n_c + 1, p))$. The same argument is applicable for $A_1({n_c, p})$, $A_2({n_c, p})$ and $A_3({n_c, p})$. Next, observe that $A_1({n_c, p}) \geq A_2({n_c, p}) \geq A_3({n_c, p}) \geq 0$ when the same step size is employed. Thus, again by \cite[Corollary 8.1.19]{horn2012matrix}, it follows that $\rho(A_1({n_c, p})) \geq \rho(A_2({n_c, p})) \geq \rho(A_3({n_c, p}))$.
\eproof

While \cref{th.incr_rates} states that there is a monotonic decrease in $\rho(A(n_c, p))$ with an increase in $n_c$, the same argument cannot be applied to analyze 
the effect of the communication probability ($p$). This is because the effect of $p$ on the term $[A(n_c, p)]_{3, 2} = p\beta_4^{n_c}L\|\Zmbf_1^{n_c}-I_{nd}\|_2 + \alpha L^2 \eta_4$ depends on the choice of the communication matrices.  
For instance, 
in $A_1(n_c, p)$ and $A_2(n_c, p)$ from \cref{col: lypanov}, the term $L(2p + \alpha L)$ increases with an increase in $p$. In contrast, 
in $A_3(n_c, p)$, the effect of $p$ on the term in the same position in the matrix ($L(2\beta^{n_c}p + \eta\alpha L)$) depends on the network and problem parameters.

Next, we present conditions under which a linear rate of convergence can be derived in expectation in terms of the network parameters ($\beta_1$, $\beta_2$, $\beta_3$, $\beta_4$) and problem parameters ($\mu$, $L$ and $\kappa = \frac{L}{\mu}$) for \texttt{RGTA}.
\btheorem \label{th. general_step_cond}
 Suppose \cref{asum: convex_and_smooth} holds, $0 < p < 1$ and $n_c \geq 1$ in \cref{alg: Randomised}. If 
 \begin{align} \label{eq: gen_step_cond}
    \alpha < min \left\{\tfrac{1}{L}, \tfrac{1 - \eta_3}{L\eta_4} , \tfrac{\eta_4(1 - \eta_1) + 2\eta_2p\beta_4^{n_c}}{2\eta_2\eta_4\kappa(L + \mu)} \left[\sqrt{1 + \tfrac{4(1-\eta_1)(1-\eta_3)\eta_2\eta_4(\kappa+1)}{(\eta_4(1 - \eta_1) + 2\eta_2p\beta_4^{n_c})^2}} - 1 \right] \right\}, 
 \end{align}
$\beta_1, \beta_3 < 1$, and $\eta_i = p\beta_i^{n_c} + (1 - p)$ for all $i = 1, 2, 3, 4$, then, for all $\delta > 0$ there exists a constant $C_{\delta}>0$ such that, for all $k \geq 0$,
\begin{align*}
    \|r_{k}\|_2 \leq C_{\delta}(\rho(A({n_c, p})) + \delta)^k \|r_0\|_2, \quad \mbox{where } \rho(A({n_c, p})) < 1.
\end{align*}
\etheorem

\bproof
Following \cite[Lemma 5]{pu2021distributed} that uses the Perron-Forbenius Theorem (\cite[Theorem 8.4.4]{horn2012matrix}), when the $3\times3$ matrix $A(n_c, p)$ is nonnegative and irreducible, it is sufficient to show that the diagonal elements of $A(n_c, p)$ are less than one and $det(I_3 - A({n_c, p})) > 0$ in order to guarantee $\rho(A(n_c, p)) < 1$. If $p<1$, the matrix $A(n_c, p)$ is always irreducible as $\eta_i > 0$ for all $i = 1, 2, 3, 4$.

We first consider the diagonal elements of the matrix $A(n_c, p)$. The first element is $1 - \alpha \mu \leq 1 -\frac{\mu}{L} < 1$
due to \eqref{eq: gen_step_cond}. The second element is $\eta_1 < 1$, as $\beta_1 < 1$ and $p > 0$. Finally, the third element is 
$\eta_3 + \alpha\eta_4L < \eta_3 + \frac{1 - \eta_3}{\eta_4 L}\eta_4 L = 1$
due to \eqref{eq: gen_step_cond} and $\eta_3 < 1$ as $\beta_3 < 1$ and $ p > 0$.

Next, we consider
\begin{align*}
&\det(I_3 - A(n_c, p)) \\
=&-\alpha \left(\alpha^2\left(L^3\eta_2\eta_4 + \mu L^2\eta_2\eta_4\right) + \alpha\left(L\mu\eta_4(1- \eta_1) + 2L\mu\eta_2 p \beta_4^{n_c}\right) - \mu\left(1 - \eta_1\right)\left(1 - \eta_3\right)\right)  \\
=&-\left(L + \mu \right) L^2 \eta_2\eta_4 \alpha(\alpha - \alpha_l)(\alpha - \alpha_u),
\end{align*}
where $\alpha_l = \alpha_1 - \alpha_2$, $\alpha_u = \alpha_1 + \alpha_2$, and
\begin{align*}
    \alpha_1 = -\tfrac{\eta_4(1 - \eta_1) + 2\eta_2p\beta_4^{n_c}}{2\eta_2\eta_4\kappa(L + \mu)} \quad \mbox{and} \quad \alpha_2 = -\alpha_1 \sqrt{1 + \tfrac{4(1-\eta_1)(1-\eta_3)\eta_2\eta_4(\kappa+1)}{(\eta_4(1 - \eta_1) + 2\eta_2p\beta_4^{n_c})^2}} .
\end{align*}
Observe that $\alpha_l < 0 < \alpha_u$ since $\alpha_1 < 0$ and $\alpha_2 > |\alpha_1|$. From \eqref{eq: gen_step_cond}, we have $0 < \alpha < \alpha_u$. Therefore, $\det(I_3 - A(n_c, p)) > 0$, which combined with the fact that the diagonal elements of the matrix are less than 1, implies $\rho(A(n_c, p)) < 1$.

Finally, we bound the norm of error vector $\|r_k\|_2$ by telescoping $r_{i+1} \leq A(n_c, p) r_{i}$ from $i = 0$ to $k-1$ and the triangle inequality,
\begin{align*}
    \|r_{k}\|_2 &\leq \|A({n_c, p})^k\|_2\|r_0\|_2.
\end{align*}
By \cite[Corollary 5.6.13]{horn2012matrix}, we can bound $\|A({n_c, p})^k\|_2 \leq C_{\delta}(\rho(A(n_c, p)) + \delta)^k$, where $\delta > 0$ and $C_{\delta}$ is a positive constant that depends 
on $A(n_c, p)$ and $\delta$.
\eproof

\cref{th. general_step_cond} shows that a sufficient condition to ensure a linear rate of convergence in expectation is that the matrices $\Wmbf_1$ and $\Wmbf_3$ represent connected networks.
This does not impose any relationship between the communication matrices $\Wmbf_1$, $\Wmbf_2$, $\Wmbf_3$ and $\Wmbf_4$. These are the same minimal conditions on the network established for the iterate update form \eqref{eq: general_form} in \cite{berahas2023balancing}. 

In the next 
corollary, we establish explicit conditions for a linear rate of convergence in expectation for \texttt{RGTA-1}, \texttt{RGTA-2} and \texttt{RGTA-3}. 

\bcorollary \label{col. step_cond}
Suppose \cref{asum: convex_and_smooth} holds, $0 < p < 1$ and $n_c \geq 1$ in \cref{alg: Randomised}. If the following step size conditions hold for the methods described in \cref{tab: Algorithm_Def},
\begin{equation} \label{eq: step_cond_methods}
\begin{aligned}
    \mbox{\texttt{RGTA-1:}}       
    \quad &\alpha < min \left\{ \tfrac{1 - \eta}{L} , p\tfrac{(3 - \beta^{n_c})}{2\kappa(L + \mu)}\left[\sqrt{1 + 4(\kappa + 1)\left[ \tfrac{1 - \beta^{n_c}}{3 - \beta^{n_c}} \right]^2} - 1\right] \right\}, \\
    \mbox{\texttt{RGTA-2:}}   
    \quad &\alpha < min \left\{\tfrac{1 - \eta}{L} , p\tfrac{(2\eta + 1 - \beta^{n_c})}{2\kappa\eta(L + \mu)}\left[\sqrt{1 + 4\eta(\kappa + 1)\left[ \tfrac{1 - \beta^{n_c}}{2\eta + 1 - \beta^{n_c}} \right]^2} - 1\right]  \right\}, \\
    \mbox{\texttt{RGTA-3:}}    
    \quad &\alpha < min \left\{\tfrac{1}{L}, \tfrac{1 - \eta}{L\eta} , p\tfrac{1 + \beta^{n_c}}{2\kappa\eta(L + \mu)}\left[\sqrt{1 + 4(\kappa + 1)\left[ \tfrac{1 - \beta^{n_c}}{1 + \beta^{n_c}} \right]^2} - 1\right]\right\},   
\end{aligned}
\end{equation}
where $\eta = p\beta^{n_c} + (1-p)$, then, for all $\delta > 0$ there exists a constant $C_{i, \delta}>0$ such that, for all $k \geq 0$,
\begin{align*}
    \|r_{k}\|_2 \leq C_{i, \delta}(\rho(A_i({n_c, p})) + \delta)^k \|r_0\|_2, \quad \mbox{where $\rho(A_i({n_c, p})) < 1, \quad \forall \,\, i=1, 2, 3$.}
\end{align*}
\ecorollary
\bproof
The conditions given in \cref{th. general_step_cond} are satisfied by all three methods. That is, the matrices $A_1(n_c, p)$, $A_2(n_c, p)$, and $A_3(n_c, p)$ are irreducible as $\eta > 0$ because $ p < 1$, and $\beta < 1$. 
Therefore, substituting values for communication matrices in \eqref{eq: gen_step_cond} yields the desired result. Since $p < 1$, $\eta > 0$ and  $\frac{1-\eta}{L} < \frac{1}{L}$, the \texttt{RGTA-1} and  \texttt{RGTA-2} step size bounds do not feature $\frac{1}{L}$.
\eproof
 
The step size conditions in \cref{col. step_cond} show how performing more communications, i.e., increasing $n_c$ or $p$ allows for a large step size to be employed for all three methods. That said, the effect is the strongest in \texttt{RGTA-3} due to the presence of $\eta$ in the denominator of the second term. This allows for the possibility of employing a step size similar to gradient descent, i.e., $\frac{1}{L}$, 
when enough communication steps are performed (high $n_c$) sufficiently frequently (high $p$). The effect is comparatively weaker in \texttt{RGTA-2} and the weakest in \texttt{RGTA-1}, i.e., under the same parameters ($p$ and $n_c$), a larger step size is potentially permissible in \texttt{RGTA-2} than in \texttt{RGTA-1}.

In \cref{th. general_step_cond}, we derived a sufficient condition on the step size such that \cref{alg: Randomised} converges at a linear rate. We now provide an upper bound on the convergence rate for a sufficiently small step size $\alpha >0$. 

\btheorem  \label{th. general_rate_bound}
    Suppose \cref{asum: convex_and_smooth} holds, $0 < p < 1$ and $n_c \geq 1$ in \cref{alg: Randomised}. If $\alpha \leq \frac{1}{L}$, then, 
    \begin{align*}
        \rho(A(n_c, p)) \leq \lambda_u = \max\left\{1 - \tfrac{\alpha\mu}{2}, \hat{\lambda} + \sqrt{2\alpha L \kappa \eta_2\eta_4}\right\}, \numberthis \label{eq: general_rate_rate}
    \end{align*}
    where $\hat{\lambda} = \tfrac{\eta_1 + \eta_3 + \alpha L\eta_4  + \sqrt{\left(\eta_1 - \eta_3 - \alpha L \eta_4 \right)^2  + 4\eta_2\eta_4 \alpha^2 L^2 + 8p\alpha L\eta_2\beta_4^{n_c}}}{2} $ 
    and $\eta_i = p\beta_i^{n_c} + (1 - p)$ for all $i = 1, 2, 3, 4$.
\etheorem

\bproof
The matrix $A(n_c, p)$ is nonnegative 
and irreducible under $0 < p < 1$ as $\eta_i > 0$ for all $i=1, 2, 3, 4$. Thus, by the Perron-Forbenius Theorem (\cite[Theorem 8.4.4]{horn2012matrix}), the spectral radius of the matrix $A(n_c, p)$ is equal to its largest eigenvalue which is a positive real number. We prove that $\lambda_u$ is an upper bound on the largest eigenvalue following a similar approach to \cite{qu2017harnessing}. We do this by showing the characteristic equation is positive at $\lambda_u$ and increasing for all values greater than $\lambda_u$. Consider, 
 \begin{align*}
g(\lambda) &= \det(\lambda I_3 - A(n_c, p)) \\
&= (\lambda - 1 + \alpha\mu)\left((\lambda - \eta_1)(\lambda - \eta_3 - \alpha L\eta_4) - \alpha L\eta_2(2p\beta_4^{n_c} + \alpha L\eta_4)\right) - \alpha^3 L^3 \eta_2\eta_4 \\
&= (\lambda - 1 + \alpha\mu)q(\lambda) - \alpha^3 L^3 \eta_2\eta_4 \numberthis \label{eq. chareq}
 \end{align*}
where $q(\lambda) = \lambda^2 -\lambda(\eta_1 + \eta_3 + \alpha L\eta_4)  + \eta_1(\eta_3 + \alpha L\eta_4) - \alpha L\eta_2(2p\beta_4^{n_c} + \alpha L\eta_4)$. Let the roots of the quadratic function $q(\lambda)$ be denoted as $\lambda_1$ and $\lambda_2$. Then, we have,
\begin{align*}
\max\{\lambda_1, \lambda_2\} &= \tfrac{\eta_1 + \eta_3 + \alpha L\eta_4  + \sqrt{\left(\eta_1 + \eta_3 + \alpha L\eta_4\right)^2 - 4\left(\eta_1(\eta_3 + \alpha L\eta_4) - \alpha L\eta_2(2p\beta_4^{n_c} + \alpha L\eta_4)\right)}}{2}\\
&= \tfrac{\eta_1 + \eta_3 + \alpha L \eta_4  + \sqrt{\left(\eta_1 - \eta_3 - \alpha L \eta_4 \right)^2  + 4\eta_2\eta_4 \alpha^2L^2 + 8p\alpha L\eta_2\beta_4^{n_c}}}{2}. 
\end{align*}
Therefore, for any $\lambda \geq \max\left\{1 - \alpha \mu, \hat{\lambda}\right\}$, $g(\lambda)$ is an increasing function and lower bounded by $(\lambda - 1 + \alpha\mu)(\lambda - \hat{\lambda})^2 - \alpha^3 L^3 \eta_2\eta_4$. 
Thus,
\begin{align*}
g(\lambda_u) &\geq (\lambda - 1 + \alpha\mu)(\lambda - \hat{\lambda})^2 - \alpha^3 L^3 \eta_2\eta_4 \\
&\geq \left(1 - \frac{\alpha\mu}{2} -1 + \alpha\mu\right)(\lambda - \hat{\lambda})^2 - \alpha^3 L^3 \eta_2\eta_4 \\
&\geq \frac{\alpha \mu}{2}\left(\frac{2\alpha L^2\eta_2\eta_4}{\mu}\right)  - \alpha^3 L^3 \eta_2\eta_4 \\
&=\alpha^2L^2\eta_2\eta_4(1 - \alpha L) \geq 0,
\end{align*}
where the second and third inequalities are due to the definition of $\lambda_u$ and the final quantity is nonnegative due to $\alpha L \leq 1$. From the above arguments, we can conclude that $\rho(A({n_c, p})) \leq \lambda_u$ which completes the proof. 
\eproof

\cref{th. general_rate_bound} is derived independent of the conditions in \cref{th. general_step_cond}. When $\rho(A(n_c, p)) < 1$ is imposed as a condition using \cref{th. general_rate_bound}, $\beta_1, \beta_3 < 1$ arises 
as a necessary condition for convergence. We show this requirement by constructing a lower bound on $\lambda_u$ as  $\lambda_u \geq \hat{\lambda} \geq \tfrac{\eta_1 + \eta_3 + |\eta_1 - \eta_3|}{2}$.
For convergence we require $\lambda_u < 1$, i.e., $\tfrac{\eta_1 + \eta_3 + |\eta_1 - \eta_3|}{2} < 1$ which implies  $\eta_1, \eta_3 < 1$ and, consequently, 
$\beta_1, \beta_3 < 1$. Thus, we again require $\Wmbf_1$ and $\Wmbf_3$ to represent connected networks. The step size condition in \cref{th. general_step_cond} is $\mathcal{O}(L^{-1}\kappa^{-1/2})$ while the step size condition from \cref{th. general_rate_bound} is $\mathcal{O}(L^{-1}\kappa^{-1})$, which is more pessimistic. 
We use \cref{th. general_rate_bound} to give an explicit bound on the convergence rate which allows us to better differentiate among different methods and the effect of the  parameters $n_c$ and $p$, see \cref{th. rate_bound}.

\bcorollary  \label{th. rate_bound}
Suppose \cref{asum: convex_and_smooth} holds, $0 < p < 1$ and $n_c \geq 1$ in \cref{alg: Randomised}. If $\alpha \leq \frac{1}{L}$, then the spectral radii for the methods described in \cref{tab: Algorithm_Def} satisfy
    \begin{align*}
        \mbox{\texttt{RGTA-1:}}       \quad 
            \rho(A_1({n_c, p}))  & < \max\left\{1 - \tfrac{\alpha\mu}{2}, \eta + \sqrt{\alpha L} \left(2.5 + \sqrt{2\kappa}\right)\right\} \\
        \mbox{\texttt{RGTA-2:}}   \quad 
            \rho(A_2({n_c, p})) & < \max\left\{1 - \tfrac{\alpha\mu}{2}, \eta + \sqrt{\alpha L} \left(2.5 + \sqrt{2\kappa \eta}\right)\right\} \numberthis \label{eq: rates} \\
        \mbox{\texttt{RGTA-3:}}    \quad 
            \rho(A_3({n_c, p})) & < \max\left\{1 - \tfrac{\alpha\mu}{2}, \eta\left(1 + \sqrt{\alpha L} \left(2.5 + \sqrt{2\kappa}\right)\right)\right\}
    \end{align*}
where $\eta = p\beta^{n_c} + (1 - p)$.
\ecorollary

\bproof
Since the conditions in \cref{th. general_rate_bound} are satisfied, we can plug in the values for $\beta_i$ and $\eta_i$, for all $i=1,2,3,4$, for each method to get an upper bound on the spectral radius. The upper bound $\lambda_u$ for \texttt{RGTA-1} can be simplified as,
\begin{align*}
    \hat{\lambda} + \sqrt{\frac{2\alpha L^2 \eta_2\eta_4}{\mu}} &= \frac{2\eta + \alpha L  + \sqrt{5\alpha^2L^2 + 8p\alpha L}}{2} + \sqrt{\frac{2\alpha L^2}{\mu}} \\
    &= \eta + \frac{\sqrt{\alpha L}}{2}\left(\sqrt{\alpha L} + 2\sqrt{2\kappa} + \sqrt{8p + 5\alpha L} \right) \\
    &\leq \eta + \sqrt{\alpha L} \left(2.5 + \sqrt{2\kappa}\right),
\end{align*}
where the last inequality is due to  $\alpha \leq \frac{1}{L}$ and $p < 1$. Following the same approach for \texttt{RGTA-2}, $\lambda_u$  can be simplified as,
\begin{align*}
    \hat{\lambda} + \sqrt{\frac{2\alpha L^2 \eta_2\eta_4}{\mu}} &= \frac{2\eta + \alpha L  + \sqrt{\alpha^2 L^2 + 4\alpha^2 L^2\eta + 8p\alpha L\eta}}{2} + \sqrt{\frac{2\alpha L^2\eta}{\mu}} \\
    &= \eta + \frac{\sqrt{\alpha L}}{2}\left(\sqrt{\alpha L} + 2\sqrt{2\kappa \eta} + \sqrt{8p\eta + 4\alpha L\eta + \alpha L} \right) \\
    &\leq \eta + \sqrt{\alpha L} \left(2.5 + \sqrt{2\kappa \eta}\right),
\end{align*}
where the last inequality is due to $\alpha \leq \frac{1}{L}$ and $p < 1$. Finally, the upper bound for \texttt{RGTA-3} can be simplified as, 
\begin{align*}
    \hat{\lambda} + \sqrt{\frac{2\alpha L^2 \eta_2\eta_4}{\mu}} &= \frac{2\eta + \alpha L\eta  + \sqrt{ 5\alpha^2 L^2(\eta)^2 + 8p\alpha L(\eta\beta^{n_c})}}{2} + \sqrt{\frac{2\alpha L^2(\eta)^2}{\mu}} \\
    &\leq \eta\left(1  + \frac{\sqrt{\alpha L}}{2}\left(\sqrt{\alpha L} + 2\sqrt{2\kappa} + \sqrt{8p + 5\alpha L} \right)\right) \\
    &\leq \eta\left(1 + \sqrt{\alpha L} \left(2.5 + \sqrt{2\kappa}\right)\right),
\end{align*}
where the first inequality is due to $\eta \geq \beta^{n_c}$ and the last due to $\alpha \leq \frac{1}{L}$ and $ p < 1$.
\eproof

By \cref{th. rate_bound},  it is evident that the 
convergence rate for all three methods improves with an increase in communication steps, i.e., increase in $n_c$ or $p$. When 
deriving these bounds, we upper bound the probability $p$ by one in the last summand. 
While that summand shows worsening of the convergence rate with an increase in $p$, as it is dominated by $\sqrt{\kappa}$, bounding it does not create any contradictions to our inferences.

\cref{th. general_step_cond,th. general_rate_bound} 
assume that $0 < p < 1$ for the general analysis of \cref{alg: Randomised}. While $ p > 0$ is a required condition to guarantee convergence of the iterates, 
$p < 1$ is only required to ensure that the matrix $A(n_c, p)$ is irreducible. When $p=1$, i.e., communication is performed every iteration, the matrix $A(n_c, p)$ can be reducible for certain choices of communication matrices. For example, when applied to fully connected networks with equal weights on all nodes, i.e., $\Wmbf = \tfrac{1_n1_n^T}{n}$, resulting in $\beta = 0$, with $p=1$, the matrices in both \texttt{RGTA-2} ($A_2(n_c, 1)$) and \texttt{RGTA-3} ($A_3(n_c, 1)$) become reducible. 
Analyzing these cases involves working with a reduced linear system compared to \eqref{eq: general_r} since some components of of $r_k$ are always zero, e.g.,  $\|\xmbf_k - \xbb_k\|_2$.
The general analysis with $p=1$ can be found in 
\cite[Section 3.1]{berahas2023balancing}. When $\beta > 0$ and $p=1$, it follows that
\begin{align*}
    \mbox{\texttt{RGTA-1:}}       \quad 
        \rho(A_1({n_c, 1}))  & < \max\left\{1 - \tfrac{\alpha\mu}{2}, \beta^{n_c} + \sqrt{\alpha L} \left(2.5 + \sqrt{2\kappa}\right)\right\} \\
    \mbox{\texttt{RGTA-2:}}   \quad 
        \rho(A_2({n_c, 1})) & < \max\left\{1 - \tfrac{\alpha\mu}{2},  \beta^{n_c} + \sqrt{\alpha L} \left(2.5 + \sqrt{2\kappa \beta^{n_c}}\right)\right\}  \\
    \mbox{\texttt{RGTA-3:}}    \quad 
        \rho(A_3({n_c, 1})) & < \max\left\{1 - \tfrac{\alpha\mu}{2}, \beta^{n_c}\left(1 + \sqrt{\alpha L} \left(2.5 + \sqrt{2\kappa}\right)\right)\right\}.
\end{align*}
The convergence rate, in the case of $p=1$, still improves with an increase in the number of communication steps ($n_c$) and the relative ordering among methods discussed earlier 
remains the same. In addition, when $p=1$ and the network is fully connected with equal weights ($\beta = 0$), \texttt{RGTA-2} and \texttt{RGTA-3} exhibit performance similar to that of gradient descent with a $(1-\alpha\mu/2)$ convergence rate; see \cite[Section 3.3]{berahas2023balancing}.

\subsection{Computation and Communication Complexity}\label{sec.compl}
We now analyze the computation and communication complexity of the \texttt{RGTA} framework, i.e., the number of computation (gradient) and communication steps required to reach an $\epsilon > 0$ accurate solution in expectation ($\|r_k\|_2 \leq \epsilon$). We show, using both theoretical and numerical illustrations, that the flexibility provided by the number of communication steps ($n_c$) and the communication probability ($p$) can lead to a reduction in computation and communication complexity as compared to the baseline gradient tracking algorithms for which $n_c=1$ and $p=1$.  We use these results to also perform a comparison among the methods described in \cref{tab: Algorithm_Def}. 

We begin by quantifying the complexity of computation (gradient) steps and the expected complexity of communication steps of the \texttt{RGTA} framework in the following corollary.
\bcorollary \label{col. complexity_results}
    Suppose \cref{asum: convex_and_smooth} holds, and $0 < p < 1$ and $n_c \geq 1$ in \cref{alg: Randomised}. Let $\{x_k\}$ and $\{y_k\}$ denote the iterates generated by \cref{alg: Randomised}, and let the constant step size $\alpha$ be sufficiently small such that $\rho(A(n_c, p)) < 1$. Then, the number of computation (gradient) steps and the expected number of communication steps required to reach a solution that satisfies $\|r_k\|_2 \leq \epsilon$, for $\epsilon > 0$, are
    \begin{equation} \label{eq: complexity}
        \frac{1}{1 - \rho(A(n_c, p))}\log\left(\frac{1}{\epsilon}\right) \quad  
        \mbox{and} \quad 
        \frac{p n_c}{1 - \rho(A(n_c, p))}\log\left(\frac{1}{\epsilon}\right),
    \end{equation}
    respectively.
\ecorollary
\bproof
Since the conditions in \cref{th. general_rate_bound} are satisfied, the result for the number of computation steps $(k)$ required to ensure $\|r_k\|_2 \leq \epsilon$ directly follows. Combining this result with the expected number of communication steps per iteration, which is $pn_c$, completes the proof.
\eproof

We investigate the effects of the number of communication steps ($n_c$) and the communication probability ($p$) on the complexity bounds from \cref{col. complexity_results} analytically and with numerical illustrations. In \cref{fig:complexity_exps_n_c,fig:complexity_exps_p}, we numerically illustrate the computation complexity and expected communication complexity bounds provided in \cref{col. complexity_results}, with $\epsilon = e^{-1}$, for the algorithms in \cref{tab: Algorithm_Def} on a synthetic ill-conditioned problem with condition number $\kappa = 10^4$ (with $\mu$ = $10$, $L$ = $10^5$). We consider three different values of $\beta \in \{0.6, 0.8, 0.9\}$ to simulate different levels of connectivity in the network. The spectral radius $\rho(A(n_c, p))$ is calculated by constructing the matrix $A(n_c, p)$ in \eqref{eq: A_matrices}, using the values specified above, and the step size is tuned to achieve the smallest value for $\rho(A(n_c, p))$. 

\begin{figure}
\centering
    \includegraphics[width = \textwidth]{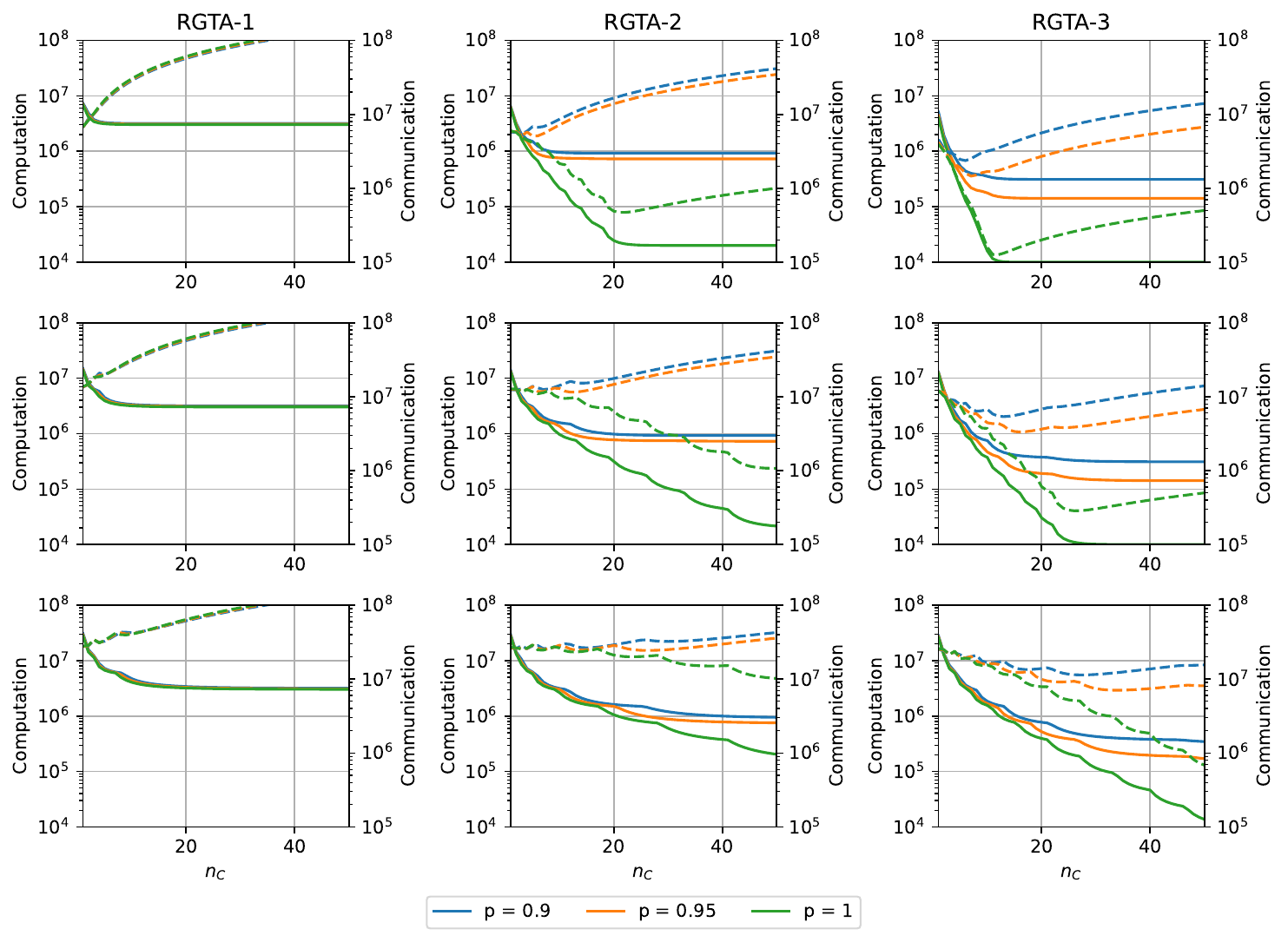}
\caption{Computation complexity (left axis, solid lines) and expected communication complexity (right axis, dashed lines) of \texttt{RGTA-1}, \texttt{RGTA-2} and \texttt{RGTA-3} from \eqref{eq: complexity} for $\epsilon = e^{-1}$ with respect to $n_c$ over an ill conditioned problem ($\mu =10$, $L = 10^5$, $\kappa = 10^4$). Each row of plots represents a value of $\beta$ increasing within the set $\{0.6, 0.8, 0.9\}$ moving down the rows. The computation complexity scale starts from the computation complexity of centralized gradient decent, i.e., $10^4$.}
\label{fig:complexity_exps_n_c}
\end{figure}

\begin{figure}
\centering
    \includegraphics[width = \textwidth]{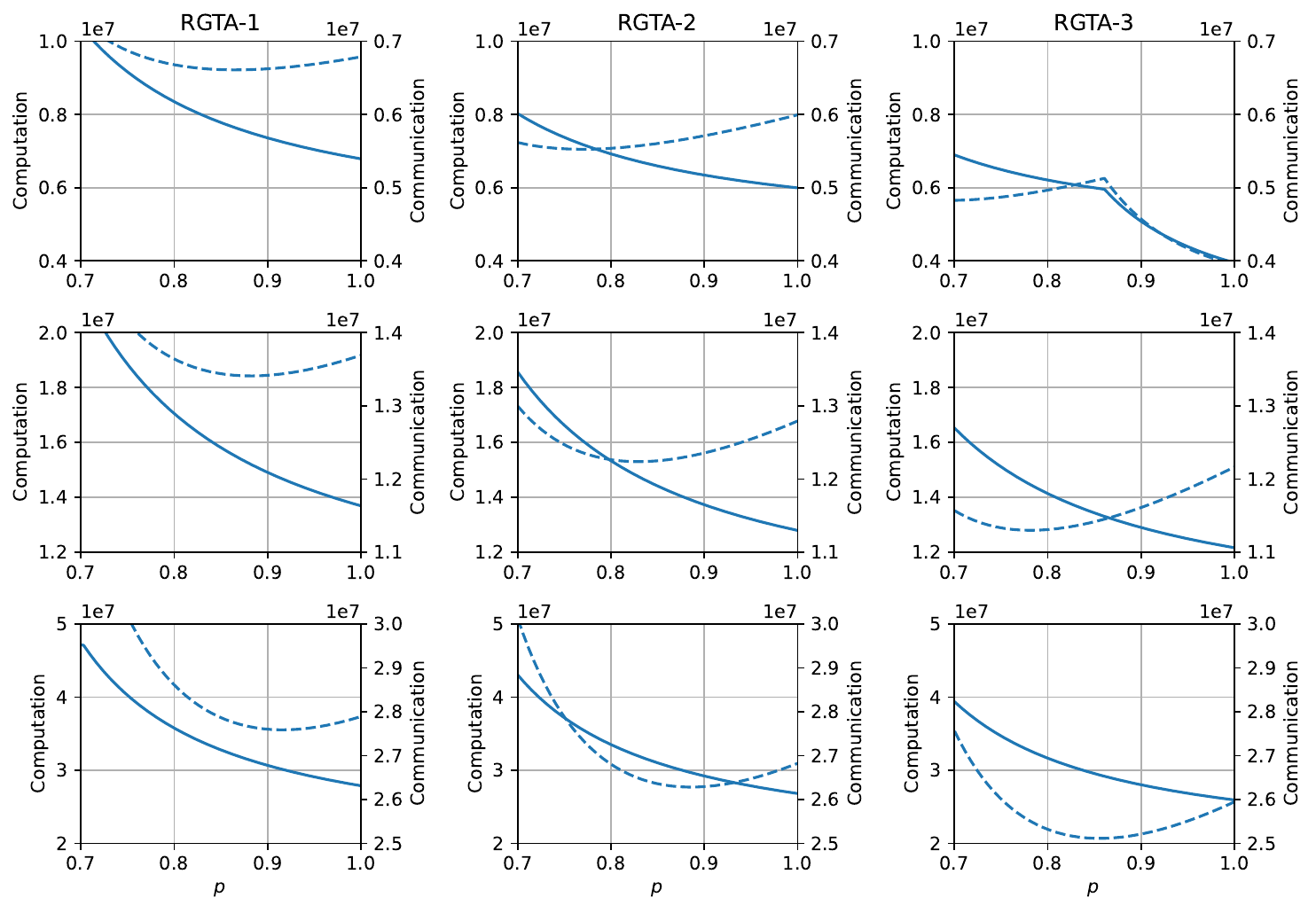}
\caption{Computation complexity (left axis, solid line) and expected communication complexity (right axis, dashed line) of \texttt{RGTA-1}, \texttt{RGTA-2} and \texttt{RGTA-3} from \eqref{eq: complexity} for $\epsilon = e^{-1}$ with respect to $p$ with $n_c = 1$ over an ill conditioned problem ($\mu =10$, $L = 10^5$, $\kappa = 10^4$). Each row of plots represents a value of $\beta$ increasing within the set $\{0.6, 0.8, 0.9\}$ moving down the rows. }
\label{fig:complexity_exps_p}
\end{figure}

\subsubsection*{Computation Complexity} 

We first analyze the effect of the number of communication steps ($n_c$) and communication probability ($p$) on the computation complexity. By \cref{th.incr_rates}, it follows that $\rho(A(n_c, p))$ and the computation complexity decrease as $n_c$ increases; see \eqref{eq: complexity}. The impact of $p$ on the computation complexity can be quantified using the upper bound on the spectral radius in \eqref{eq: general_rate_rate}. The second term in the upper bound decreases as $p$ increases, resulting in a decrease in the computation complexity. 
Thus, qualitatively, an increase in the number of communication steps, either by increasing $n_c$ or $p$, leads to a reduction in the computation complexity with a natural speed limit of the centralized gradient descent complexity, depicted in the first term of the upper bound \eqref{eq: general_rate_rate}.

The precise quantification of the effect of an increase in the communication volume on the spectral radius and subsequently on the computation complexity bounds depends on the choice of communication matrices, i.e., $\Wmbf_i$ for all $i=1, 2, 3, 4$. As shown in  \cref{th. rate_bound}, the magnitude of reduction differs for \texttt{RGTA-1}, \texttt{RGTA-2} and \texttt{RGTA-3}, particularly for ill-conditioned problems, i.e., $\kappa \gg 1$. For such problems, the upper bounds in \eqref{eq: rates} are dominated by the second terms involving $\kappa$. Since an increase in the number of communication steps, i.e., an increase in $n_c$ or $p$, decreases $\eta$, the effect is predominant in \texttt{RGTA-3} where the entire second term decreases as $\eta$ decreases.  
The effect is weaker in \texttt{RGTA-2} due to the reduced impact of $\eta$ on $\kappa$ (see \eqref{eq: rates}) and weakest in  \texttt{RGTA-1} where $\eta$ does not impact $\kappa$ (see \eqref{eq: rates}). 
This relative ordering of these methods, in terms of the effect of increasing communication volume on the computation complexity bounds, can be inferred by comparing  \texttt{RGTA-1}, \texttt{RGTA-2} and \texttt{RGTA-3} in \cref{fig:complexity_exps_n_c}. As discussed earlier, as $n_c$ increases, the computation complexity initially decreases until it reaches a plateau beyond which further reduction is unattainable. Notably, \texttt{RGTA-1} reaches this plateau with fewer communication steps, i.e., smaller $n_c$ values, as illustrated in the first column of \cref{fig:complexity_exps_n_c}. That said, the computation complexity at this plateau is higher than that of centralized gradient descent (which is $10^4$ and marks the start of the scale for computation complexity in \cref{fig:complexity_exps_n_c}). In contrast, both \texttt{RGTA-2} and \texttt{RGTA-3} achieve the computation complexity of centralized gradient descent, with \texttt{RGTA-3} reaching this level with a relatively smaller number of communication steps ($n_c$) as compared to \texttt{RGTA-2}. 
Furthermore, increasing $\beta$ (moving down the rows) requires a higher $n_c$ to achieve the plateau for computational complexity in all three algorithms.
Finally, we also observe that increasing $p$ results in reduced computation complexity for all three algorithms.

\subsubsection*{Expected Communication Complexity}  

Next, we discuss the effect of the number of communication steps ($n_c$) 
on the expected communication complexity. By \cref{col. complexity_results}, both the numerator ($pn_c$) and the denominator ($1 - \rho(A(n_c, p)$)) of the expected communication complexity increase with an increase in $n_c$, since $\rho(A(n_c, p))$ decreases with an increase in $n_c$. However, as $n_c$ increases, the rate at which the numerator increases dominates the rate at which the denominator increases, as the later is bounded above by $1$. Thus, an increase in $n_c$ eventually 
increases the expected communication complexity, and there exists an optimal $n_c^* \geq 1$ that depends on the problem specific parameters ($L, \mu, \kappa$) and the choice of communication matrices, i.e., $\Wmbf_i$ for all $i=1, 2, 3, 4$, that leads to the best (lowest) expected communication complexity. This effect is illustrated in \cref{fig:complexity_exps_n_c} where the expected communication complexity either only increases with an increase in $n_c$ (\texttt{RGTA-1}) or decreases initially until 
$\rho(A(n_c, p))$ in the denominator cannot further be decreased 
and then increases as the numerator ($pn_c$) is monotonically increasing with $n_c$ (\texttt{RGTA-2} and \texttt{RGTA-3}).
This shows the existence of an optimal $n_c^* \geq 1$ that minimizes expected communication complexity. Moreover, as $\beta$ increases, 
i.e., the network is less connected, the optimal $n_c^*$ value increases as more communications are required to reach the plateau for computation complexity. This also results in an increase in the minimum expected communication complexity obtained at $n_c^*$ as the numerator ($pn_c$) increases. 

Finally, we analyze the effect of the communication probability ($p$) on the expected communication complexity. As $p$ decreases, both the numerator ($pn_c$) and the denominator ($1 - \rho(A(n_c, p)$)) of the expected communication complexity (given in \cref{col. complexity_results}) decrease. That said, the denominator decreases at a slower rate than the numerator, resulting in a decrease in the expected communication complexity as $p$ decreases from one to zero. We should note that the framework fails to converge when $p = 0$, as in this regime the nodes never communicate and consensus cannot be achieved. Thus, an optimal $0<p^*<1$ exists that minimizes the expected communication complexity, where the specific value depends on problem specific parameters ($L, \mu, \kappa$) and the choice of the communication matrices, i.e., $\Wmbf_i$ for all $i=1, 2, 3, 4$. Due to the scale at which expected communication complexity changes with changes in $p$, these effects are not too visible in \cref{fig:complexity_exps_n_c} and we point the reader to \cref{fig:complexity_exps_p} that illustrates these changes clearly. In addition to the previously discussed effect of reduced communication complexity with decreases in $p$, and the relative ordering of \texttt{RGTA-1}, \texttt{RGTA-2} and \texttt{RGTA-3}, we observe the existence of an optimal $0 < p^* < 1$ that minimizes the expected communication complexity. Furthermore, we observe that the optimal $p^*$ and the optimal expected communication complexity achieved by \texttt{RGTA-3} are lower than those of \texttt{RGTA-2}, which, in turn, are lower than those of \texttt{RGTA-1}. Additionally, the optimal $p^*$ increases as network connectivity decreases (or $\beta$ increases) moving down the rows\footnote{An exception within these results is observed in the case of \texttt{RGTA-3} for $\beta = 0.6$, where the expected communication complexity increased with a decrease in $p$ from one. We believe this is an artifact of employing overly pessimistic and loose bounds in the matrix $A(n_c, p)$ from \eqref{eq: general_A} when the network is well connected. We provide numerical examples of reduction in expected communication complexity in \cref{sec.num_exp} on networks with high connectivity (low $\beta$).}.

\bigskip

In summary, as the number of communication steps  and/or the communication probability increases the computation complexity decreases. On the other hand, as the number of communication steps  and/or the communication probability increases the expected communication complexity (eventually) increases. The results presented in this section demonstrate that the  parameters in the \texttt{RGTA} framework ($n_c$ and $p$) indeed provide the necessary flexibility required to balance the computation complexity and the expected communication complexity. The existence of optimal choices $n_c^*$ and $p^*$ reveals that the algorithmic framework is capable of operating in regimes with reduced computation and/or expected communication complexity (or a combination) depending on the problem  and application.

\section{Numerical Experiments}\label{sec.num_exp}

In this section, we illustrate the empirical performance of the methods defined in \cref{tab: Algorithm_Def} using Python implementations\footnote{Our code will be made publicly available upon publication of the manuscript. Github repository: \texttt{\url{https://github.com/Shagun-G/Gradient-Tracking-Algorithmic-Framework}}. 
}. 
We show, via multiple example problems, the benefits of balancing 
the number of communication and computation steps
and establish a relative ordering among the methods defined in \cref{tab: Algorithm_Def}. 
We also compare the performance of our proposed methods with that of popular algorithms, e.g., FedAvg \cite{mcmahan2017communication}, Scaffold \cite{karimireddy2020scaffold} and Scaffnew \cite{mishchenko2022proxskip}.

We considered two problem classes: 
$(1)$ synthetic strongly convex quadratic problems (\cref{sec : quads}); and, $(2)$ binary classification logistic regression problems over the a9a and w8a datasets \cite{CC01a} (\cref{sec : logistic}). We solved these problems over three network structures (different mixing matrices $\Wmbf$) with $n = 16$ nodes: $(1)$ a fully connected network where all pairs of nodes are connected with equal weights (same as a 
client-server setup \cite{mcmahan2017communication}, $\Wmbf = \frac{1_n1_n^T}{n}$, $\beta = 0$); $(2)$ a connected star network where all nodes are connected to a single central node ($\beta = 0.95$); and, $(3)$ a connected line network where all nodes have two 
neighbors except for the two endpoints which have a single neighbor ($\beta = 0.998$). 
The star and line networks have low connectivity (i.e., high $\beta$ value). 
We should note that the performance of \cref{alg: Randomised} with multiple communication steps is equivalent to that of the same algorithm over a network with higher connectivity (i.e., lower $\beta$). 

The performance of the methods was measured in terms of the optimization error ($\|\bar{x}_k - x^*\|_2$) and the consensus error ($\|\xmbf_k - \xbb_k\|_2$) with respect to the number of gradient computations (``Gradients'') at each node and the number of communication rounds (``Communications''), where one communication round involves all nodes sharing information with their neighbors. The optimal solution $x^*$ was obtained analytically for the quadratic problems and by running gradient descent in the centralized setting to high accuracy, i.e., $\|\nabla f(x^*)\|_2 \leq 10^{-12}$, for the logistic regression problems.
We do not report the consensus error in the auxiliary variable $\ymbf_k$ ($\|\ymbf_k - \ybb_k\|_2$) as this measure does not provide any significant additional insights about the performance of the algorithms. We also do not report consensus error ($\|\xmbf_k - \xbb_k\|_2$) for problems over the fully connected network since it is always zero after every round of communication.

We implemented and compared 
our approach to FedAvg, Scaffold and Scaffnew. For FedAvg and Scaffold we only present results for fully-connected networks as these methods were designed to operate in this setting. These methods were also given access to all the nodes in each round of communication instead of sampling a subset of the nodes and the gradient of the complete dataset at each node was computed instead of a stochastic estimate. 
This implementation ensured a fair comparison with the \texttt{RGTA} framework.

The methods defined in \cref{tab: Algorithm_Def} were denoted by \texttt{RGTA}$-i(n_c,p)$, $i = 1, 2, 3$, where $n_c$ is the number of communications and $p$ is the probability of communication. The \texttt{RGTA} parameters were tested over the following sets, $n_c \in \{1, 5, 10, 20, 50\}$ and $p \in \{0.01, 0.1, 0.2, 0.4, 0.6, 0.8, 1\}$. 
As a result of these parameter sets, we recover popular gradient tracking methods as special cases that provide a baseline for the performance of the gradient tracking methodology, e.g., DIGing (\texttt{RGTA}$-1(1, 1)$)~\cite{nedic2017geometrically}, NEXT (\texttt{RGTA}$-2(1, 1)$) \cite{di2016next},  and Aug-DGM (\texttt{RGTA}$-3(1, 1)$) \cite{xu2015augmented}. 
The probability of communication for Scaffnew was tested over the same set. 
The number of local steps at each node in FedAvg and Scaffold was tested over the set $\{1, 5, 10, 20, 50\}$. The step sizes (one for \texttt{RGTA}, one for FedAvg, two for Scaffold and two for Scaffnew) were tuned over the set $\{2^{-t} | t = 0, 1, ..., 20\}$. All algorithms, for all problems, at all nodes were initialized with the zero vector (i.e., $\xmbf_0 = \mathbf{0}$). We should note that initializing all nodes at the same value is not required by \texttt{RGTA} but is a requirement for Scaffnew.

\subsection{Quadratic Problems}\label{sec : quads}

We first consider quadratic problems of the form
\begin{align*} 
    f(x) = \frac{1}{n} \sum_{i=1}^n \frac{1}{2}x^TQ_ix + v_i^Tx,
\end{align*}
generated using the procedure described in \cite{mokhtari2016network} where $Q_i \in \mathbb{R}^{10 \times 10}$, $Q_i \succ 0$ and $v_i \in \mathbb{R}^{10}$ is the local information at each node $i \in \{1, 2, .., n\}$ with $n=16$. Each local problem is strongly convex and $\kappa$ denotes the global condition number.

\begin{figure}
\centering
\begin{subfigure}{\textwidth}
    \includegraphics[width=\textwidth]{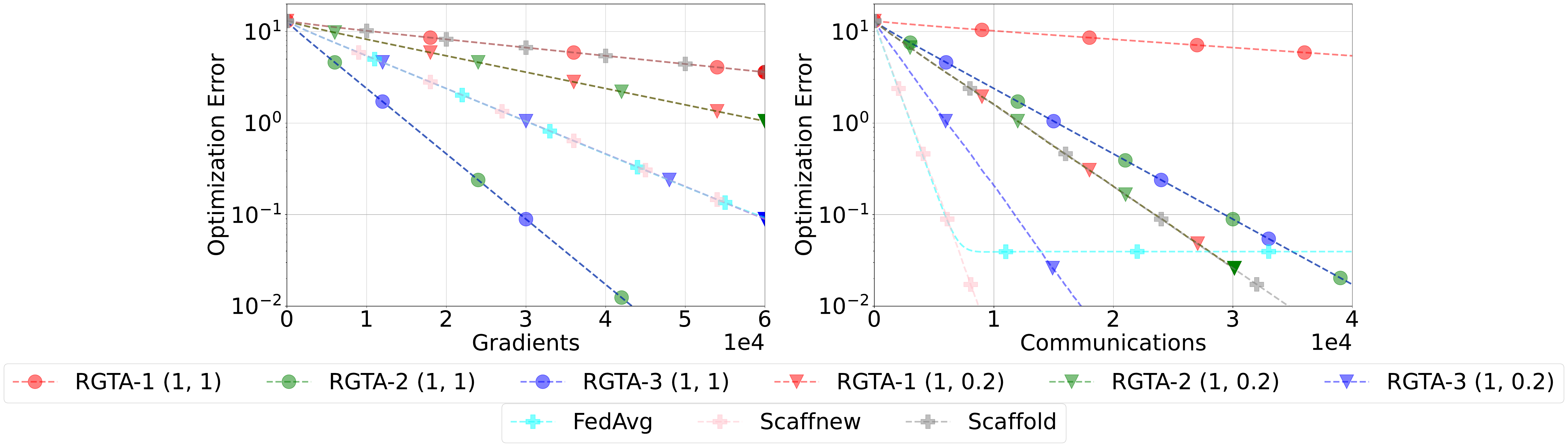} 
    \caption{Fully Connected Network, $\beta = 0$}
    \label{fig:Quad_full}
\end{subfigure}
\begin{subfigure}{\textwidth}
    \includegraphics[width=\textwidth]{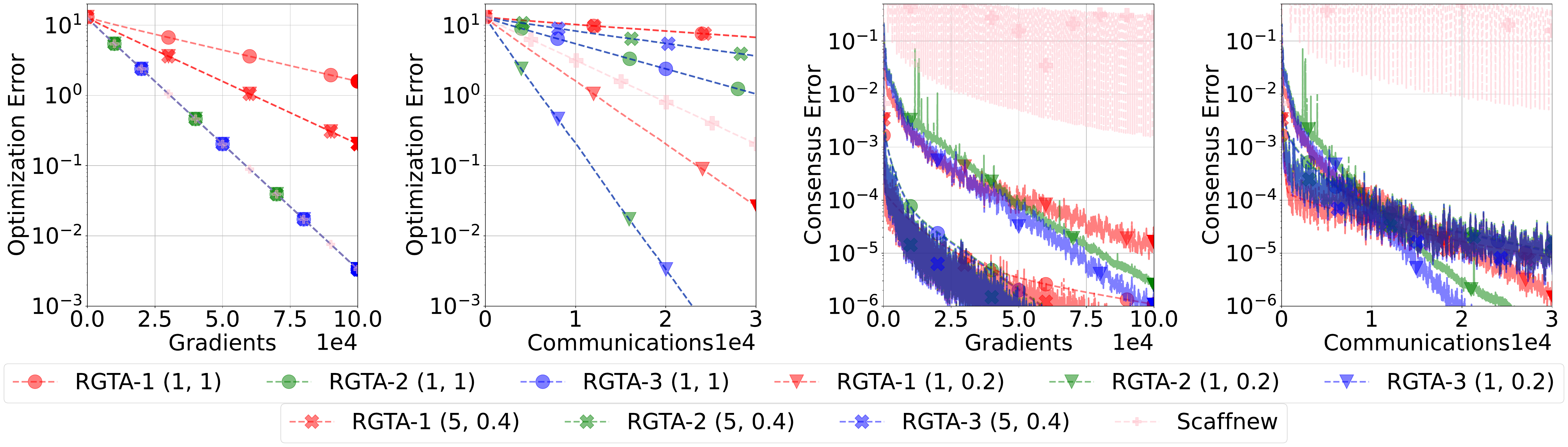} 
    \caption{Start Network, $\beta = 0.95$}
    \label{fig:Quad_star}    
\end{subfigure}
\begin{subfigure}{\textwidth}
    \includegraphics[width=\textwidth]{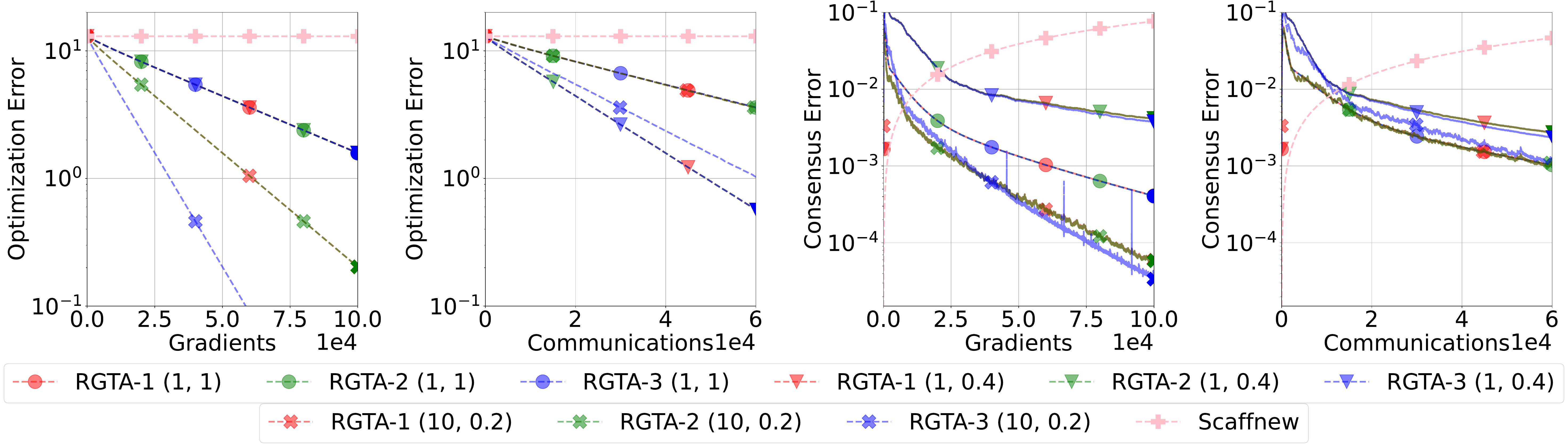} 
    \caption{Line Network, $\beta = 0.999$}
    \label{fig:Quad_line}    
\end{subfigure}
\caption{Optimization Error ($\|\xbar_k - x^*\|_2$) and Consensus Error ($\|\xmbf_k - \xbb_k\|_2$) of \texttt{RGTA-1}, \texttt{RGTA-2} and \texttt{RGTA-3} with respect to number of gradient evaluations and communication rounds for a synthetic quadratic problem ($n = 16$, $d = 10$, $\kappa \approx 10^4$) over: (a) fully connected network ($\beta =  0$); (b) star network ($\beta = 0.95$); and, (c) line network ($\beta = 0.999$).}
\label{fig:Quad_exps}
\end{figure}

\cref{fig:Quad_full,fig:Quad_star,fig:Quad_line} 
show the performance of the algorithms from \cref{tab: Algorithm_Def} on fully connected, star and line networks, respectively. 
Several key observations can be made. First, when comparing the base methods (\texttt{RGTA-i}(1, 1), $i=1,2,3$), we observe that \texttt{RGTA-3}(1, 1) either outperforms or matches the performance of \texttt{RGTA-2}(1, 1), which in turn either outperforms or matches the performance of \texttt{RGTA-1}(1, 1). Second, the differences in the performance is reduced when the network is sparsely connected (higher $\beta$). In fact, all three methods exhibit similar performance on the line graph, as seen in \cref{fig:Quad_line}. This observation is consistent with the theory presented in \cref{sec.theory}. Third, the performance of all three methods in terms of the communication rounds improves when the probability of communication ($p$) is reduced from one. 
This improvement is reflected via the reduction in the optimization error with minimal impact on the consensus error. 
Fourth, as the number of communication steps ($n_c$) is increased, for any  probability of communication ($p$) that is less than one, the performance of all three methods improves in terms of the number of gradient evaluations. This improvement 
is observed in the consensus error with minimal impact on the optimization error. 
The empirical results of the proposed framework are competitive with popular methods such as FedAvg, Scaffold and Scaffnew over the fully connected network; see  \cref{fig:Quad_full}.
We acknowledge that in \cref{fig:Quad_line}, Scaffnew is non-convergent on the line graph and attribute this to hyper-parameter tuning. That said, we tested a sufficiently large and wide set of hyper-parameters to warrant presenting these results. Overall, the performance of the proposed algorithms is robust and efficient across a wide variety of parameter values and network structures.

\begin{figure}
\centering
\includegraphics[width=\textwidth]{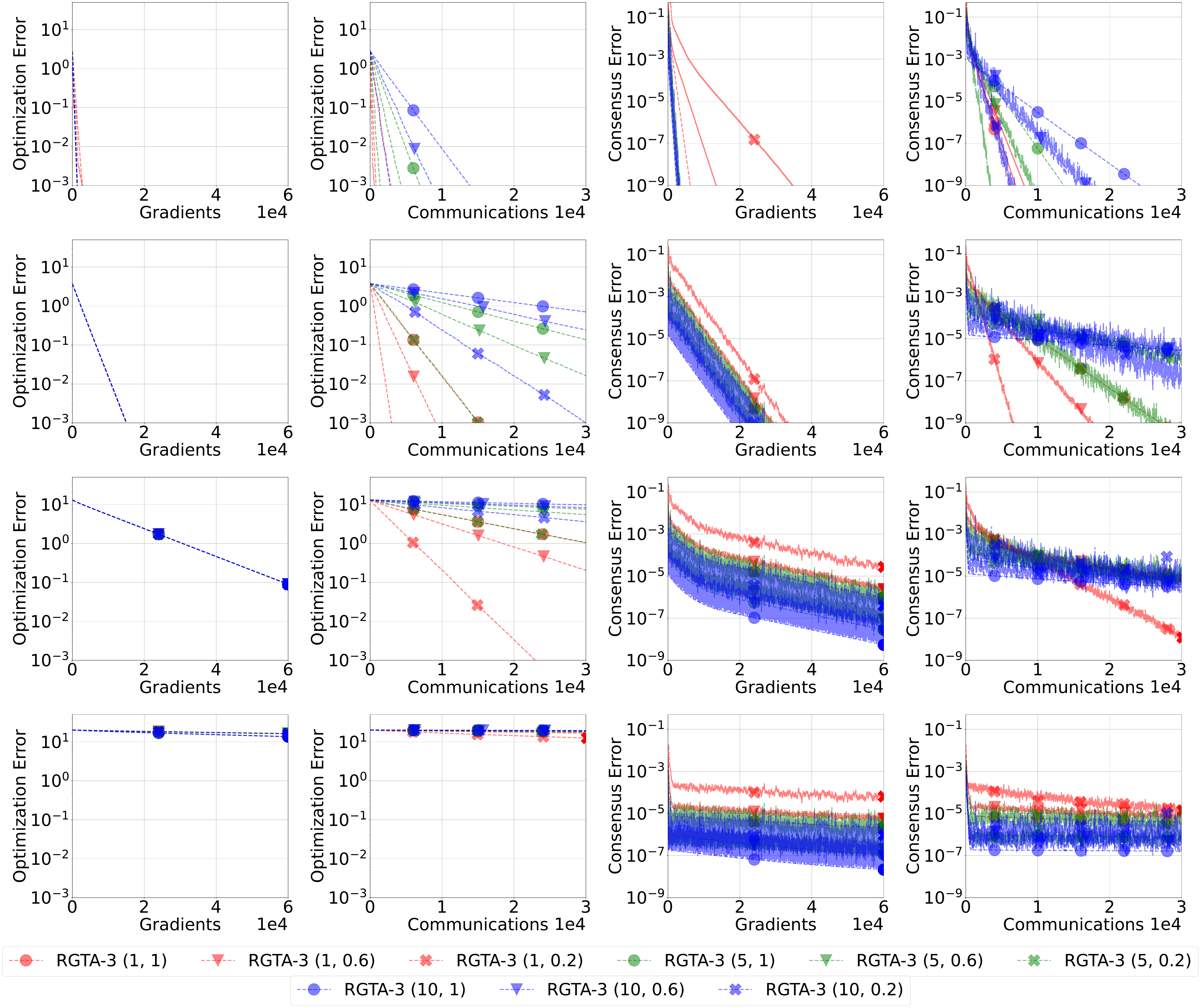} 
\caption{Optimization Error ($\|\xbar_k - x^*\|_2$) and Consensus Error ($\|\xmbf_k - \xbb_k\|_2$) of \texttt{RGTA-3} with respect to number of gradient evaluations and communication rounds over star network ($\beta =  0.95$) for four synthetic quadratic problems, each with $n = 16$ and $d = 10$ but different $\kappa$. Each row represents results for one quadratic problem and the condition number increases from top to bottom as  $\kappa \approx 10^2, 10^3, 10^4, 10^5$.}
\label{fig:Quad_sensitivity}
\end{figure}

In \cref{fig:Quad_sensitivity}, we present the sensitivity 
of \texttt{RGTA-3} over the star network with respect to the condition number of the quadratic problem ($\kappa$). We 
chose \texttt{RGTA-3} because it exhibits the best performance among the three methods. Each row in the figure corresponds to one problem, with the condition number increasing across the set $\kappa \in \{10^2, 10^3, 10^4, 10^5\}$ from top to bottom.  In terms of the optimization error, when $\kappa \approx 10^2$ (well-conditioned), the problem is easy enough to not require the flexibility provided by \texttt{RGTA-3} via the parameters $n_c$ and $p$. 
However, when $\kappa \approx 10^3$ or $10^4$, we observe the effects discussed earlier in this subsection related to \cref{fig:Quad_exps}. 
These effects are not observed when $\kappa \approx 10^5$ (ill-conditioned), where the problem is too difficult to gain any improvements from the flexibility.  
This observation shows the limitations of the flexibility explained by \cref{th. general_rate_bound}, and the myopic nature of first-order methods. That is, changing $p$ and $n_c$ only affects the coefficient of $\kappa$ in the convergence rate, and so if $\kappa$ is large enough, these changes in the coefficient are less impactful. 
Thus, in general, the performance of the \texttt{RGTA} framework on ill-conditioned problems can benefit from the usage of a more densely connected network (lower $\beta$).  
On the other hand, the consensus error improves as $\kappa$ increases as the step sizes employed and local steps taken at all nodes become smaller with an increase in the condition number. This results in minimal change in the local decision variables and low consensus error as all nodes are initialized with same (zero) vector.

\begin{figure}[H]
\centering
\includegraphics[width = \textwidth]{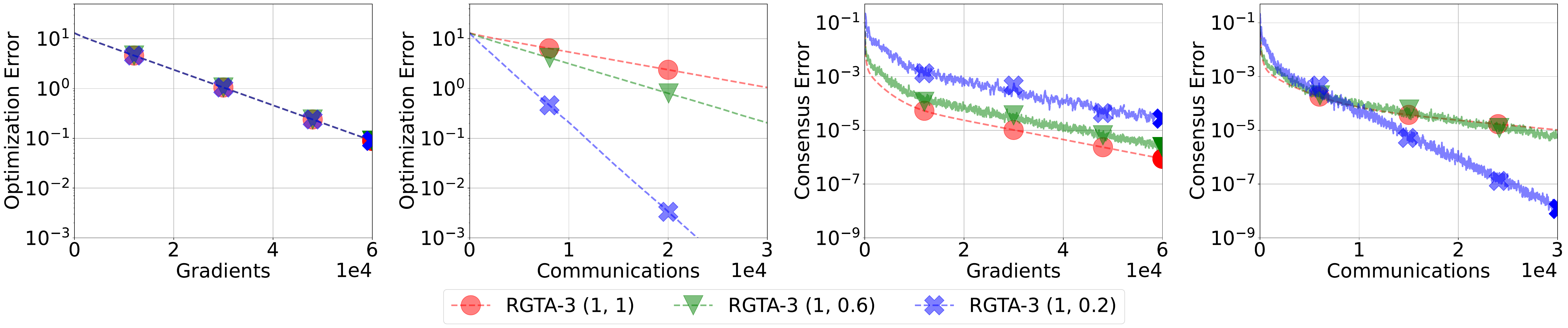}
\includegraphics[width = \textwidth]{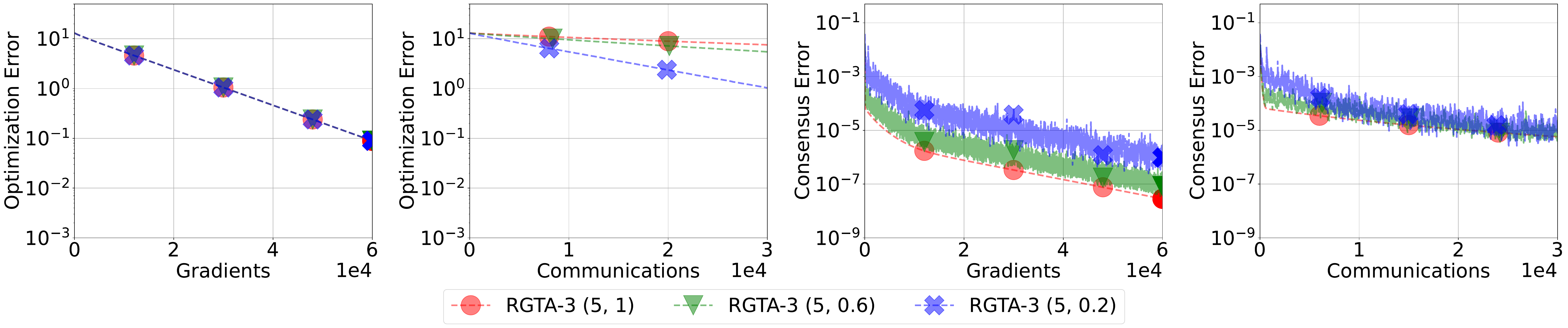}
\includegraphics[width = \textwidth]{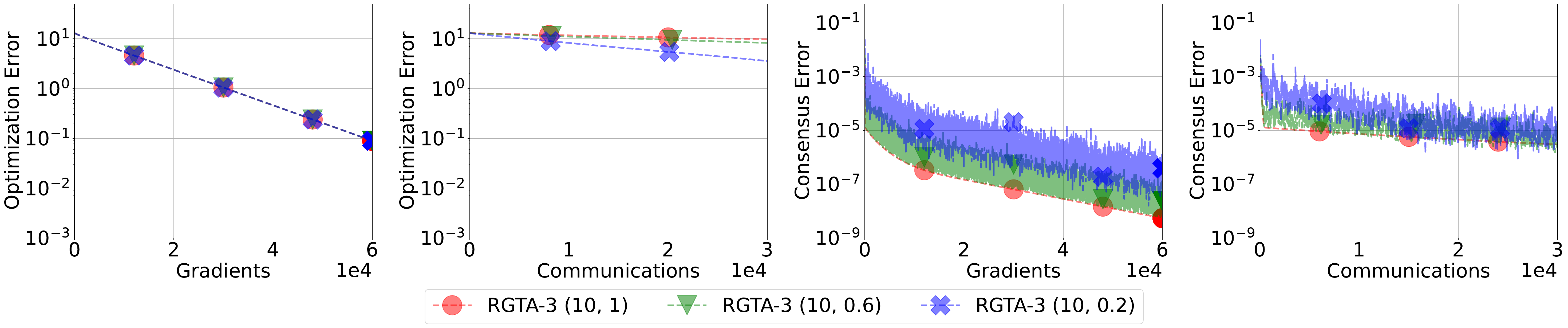}
\caption{Optimization Error ($\|\xbar_k - x^*\|_2$) and Consensus Error ($\|\xmbf_k - \xbb_k\|_2$) of \texttt{RGTA-3} with respect to number of gradient evaluations and communication rounds for a synthetic quadratic problem ($n = 16$, $d = 10$, $\kappa \approx 10^4$) over star network ($\beta =  0.95$).}
\label{fig:Quad_star_sentivity_detailed}
\end{figure}

In \cref{fig:Quad_star_sentivity_detailed}, we further illustrate the sensitivity of \texttt{RGTA-3} with respect to $n_c$ and $p$ on one of the quadratic problems discussed in \cref{fig:Quad_sensitivity}, where $\kappa \approx 10^4$. Each row within this figure shows the performance for different $n_c$ values, and within each row the figure demonstrates the impact of changing $p$ while keeping $n_c$ constant. 
In the first column, we observe that changing $p$ or $n_c$ had negligible effects on the optimization error with respect to the number of gradient evaluations. Similarly, there is negligible effect on the consensus error with respect to number of communications (fourth column). However, in the second column, we observe that the optimization error with respect to number of communications deteriorates with increased communication, i.e., it deteriorates by increasing $n_c$ and improves by decreasing $p$. On the other hand, the consensus error with respect to gradient evaluations improves with increased number of communications, highlighting the contrasting effect of the communication parameters on different performance metrics as illustrated by the discussion in \cref{sec.compl}.


\subsection{Binary Classification Logistic Regression}\label{sec : logistic}
We consider $\ell_2$-regularized binary classification logistic regression problems of the form
\begin{align*} 
    f(x) &= \frac{1}{n} \sum_{i=1}^n \frac{1}{n_i}\log(1 + e^{-b_iA_ix}) + \frac{1}{n_i}\|x\|_2^2, 
\end{align*}
where each node $i \in \{1, 2, .., n\}$ has a portion of data samples $A_i \in \mathbb{R}^{n_i \times d}$ and corresponding labels $b_i \in \{-1, 1\}^{n_i}$. Experiments were performed over the a9a dataset ($n = 16$, $d = 123$, $\sum_{i=1}^n n_i = 32,561$) and the w8a dataset ($n = 16$, $d = 300$, $\sum_{i=1}^n n_i = 49,749 $) \cite{CC01a}. 

\cref{fig:a9a_exps,fig:w8a_exps} show the performance of the algorithms from \cref{tab: Algorithm_Def} on logistic regression problems on the a9a and w8a datasets, respectively, over fully connected, star and line networks. We find that similar observations to those made in \cref{sec : quads} also hold for this set of experiments. Namely, $(1)$ the relative ordering among algorithms (\texttt{RGTA-3} at least as good as \texttt{RGTA-2}, and \texttt{RGTA-2} at least as good as \texttt{RGTA-1}); $(2)$ the reduction of communication probability ($p$) improves performance with respect to number of communication rounds depicted in the optimization error; $(3)$ an increase in number of communication steps ($n_c$) results in an improvement, mainly seen in the consensus error, with respect to number of gradient evaluations. These effects are 
amplified for networks with higher connectivity 
(lower $\beta$).


Furthermore, \cref{fig:a9a_full,fig:w8a_full}, show that on logistic regression problems on the a9a and w8a datasets over fully connected networks, the performance of \texttt{RGTA-3}(1, $p$) is competitive to popular algorithms from the literature. 
The empirical advantages (and improvement in performance) of the \texttt{RGTA} framework as compared to the baseline performance of popular gradient tracking methods is highlighted in Figs. \ref{fig:a9a_star}, \ref{fig:a9a_line}, \ref{fig:w8a_star} and \ref{fig:w8a_line}. 
That said, we note that Scaffnew outperforms the \texttt{RGTA} framework on the line network experiments. 
While we do not fully understand the reason for this performance (and disparity as compared to the results on the quardratic problems), considering the tuning effort required for Scaffnew (including two step sizes for the decentralized setting), we believe that the proposed \texttt{RGTA} framework is a robust and competitive alternative. 

\begin{figure}[H]
\centering
\begin{subfigure}{\textwidth}
    \includegraphics[width=\textwidth]{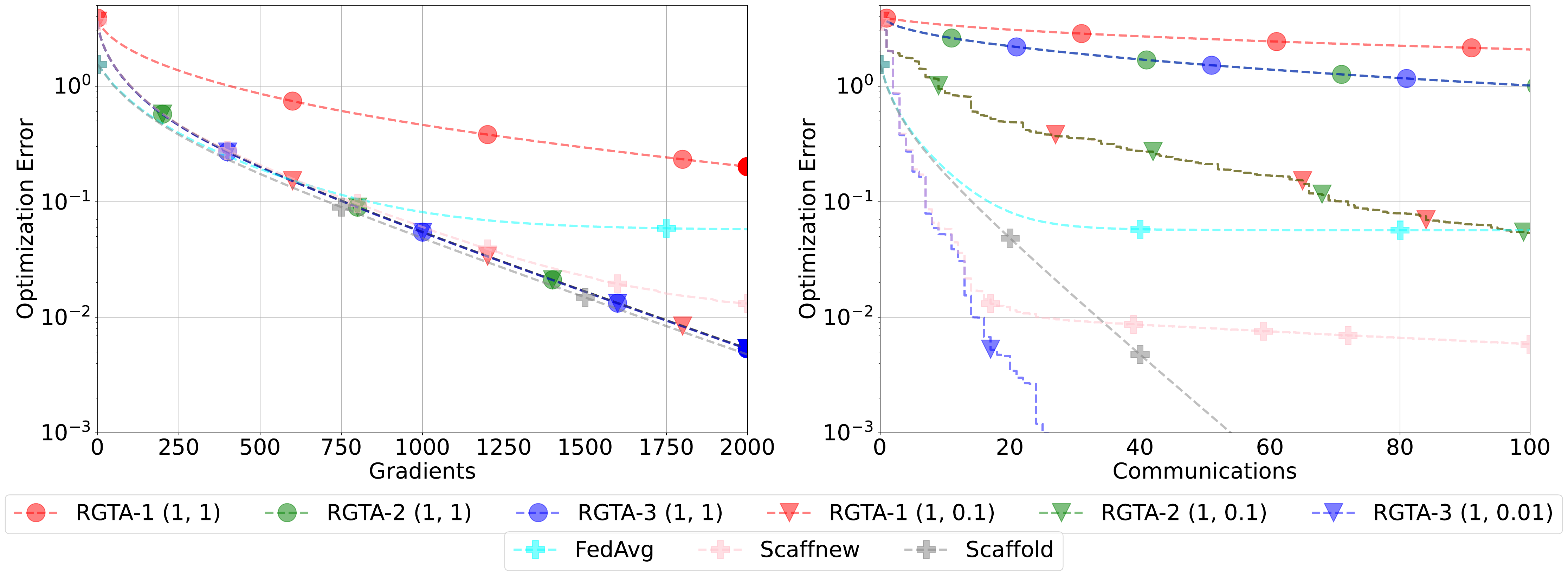}
    \caption{Fully Connected Network, $\beta = 0$}
    \label{fig:a9a_full}
\end{subfigure}

\begin{subfigure}{\textwidth}
    \includegraphics[width=\textwidth]{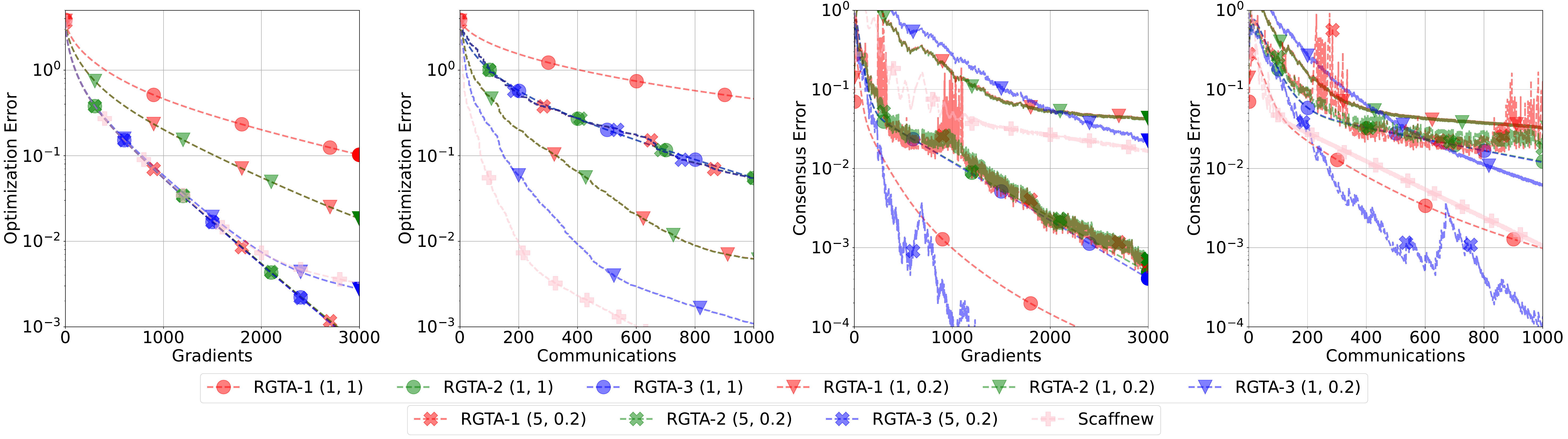} 
    \caption{Start Network, $\beta = 0.95$}
    \label{fig:a9a_star}
\end{subfigure}

\begin{subfigure}{\textwidth}
    \includegraphics[width=\textwidth]{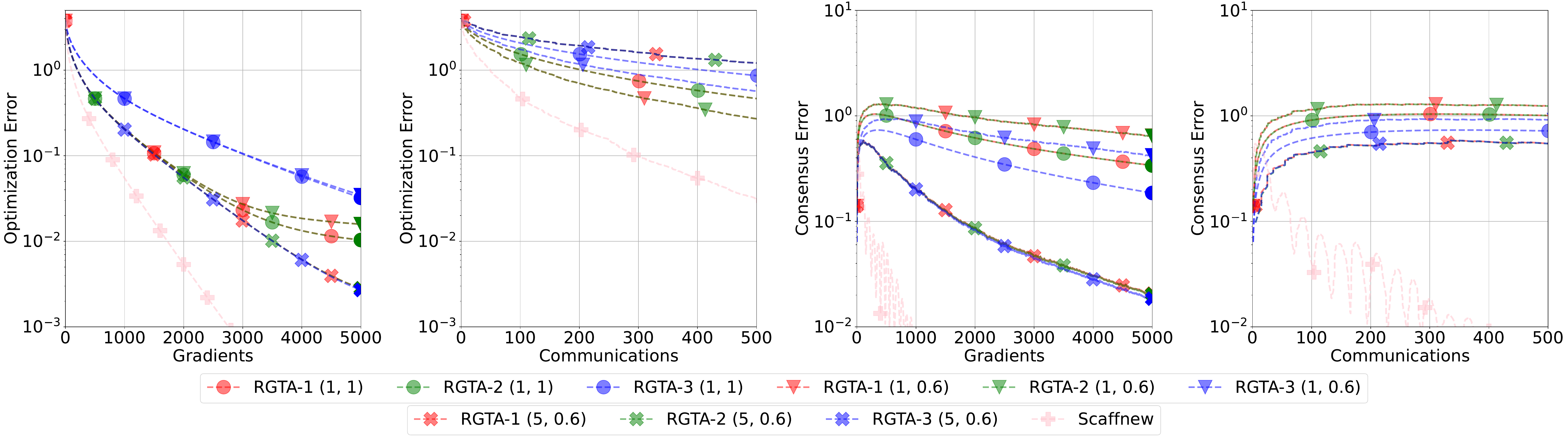}   
    \caption{Line Network, $\beta = 0.999$}
    \label{fig:a9a_line}
\end{subfigure}
\caption{Optimization Error ($\|\xbar_k - x^*\|_2$) and Consensus Error ($\|\xmbf_k - \xbb_k\|_2$) of \texttt{RGTA-1}, \texttt{RGTA-2} and \texttt{RGTA-3} with respect to number of gradient evaluations and communication rounds for binary logistic regression on the a9a dataset ($n = 16$, $d = 123$, $\sum_{i=1}^n n_i = 32,561$) over; (a) fully connected network ($\beta =  0$); (b) star network ($\beta = 0.95$); and, (c) line network ($\beta = 0.999$).}
\label{fig:a9a_exps}
\end{figure}

\begin{figure}[H]
\centering
\begin{subfigure}{\textwidth}
    \includegraphics[width=\textwidth]{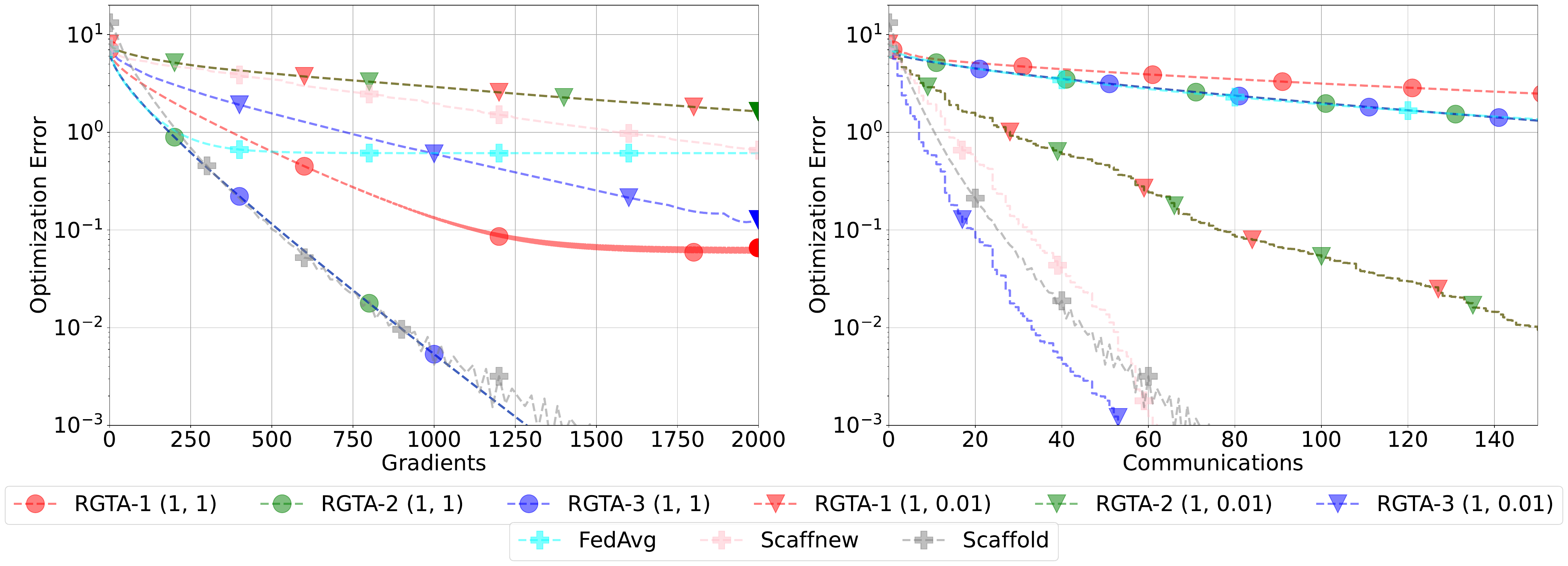} 
    \caption{Fully Connected Network, $\beta = 0$}
    \label{fig:w8a_full}
\end{subfigure}
\begin{subfigure}{\textwidth}
    \includegraphics[width=\textwidth]{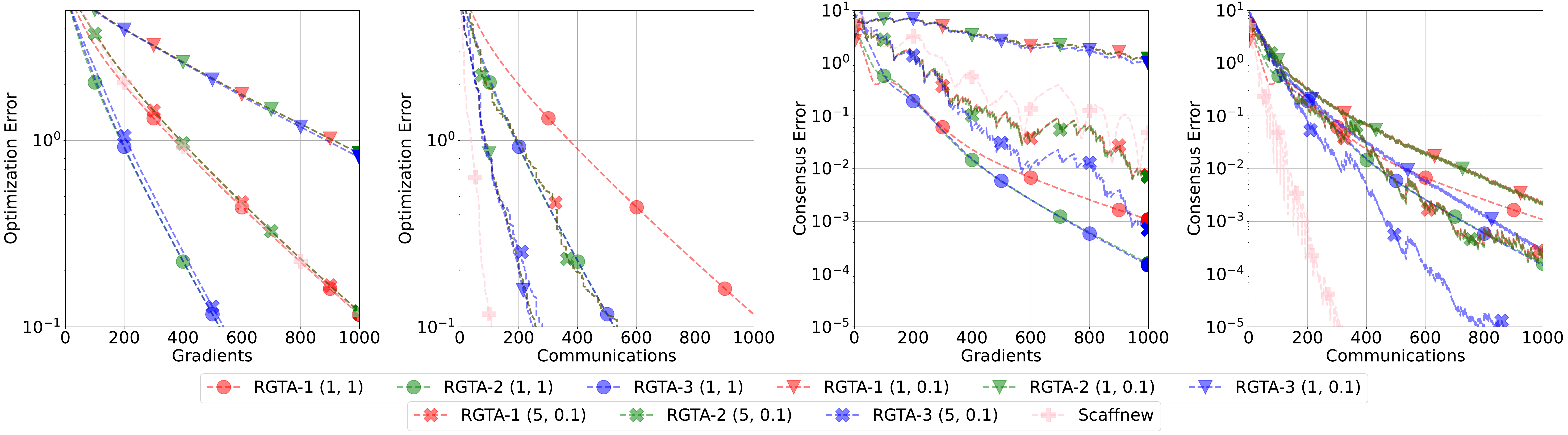} 
    \caption{Start Network, $\beta = 0.95$}
    \label{fig:w8a_star}
\end{subfigure}
\begin{subfigure}{\textwidth}
    \includegraphics[width=\textwidth]{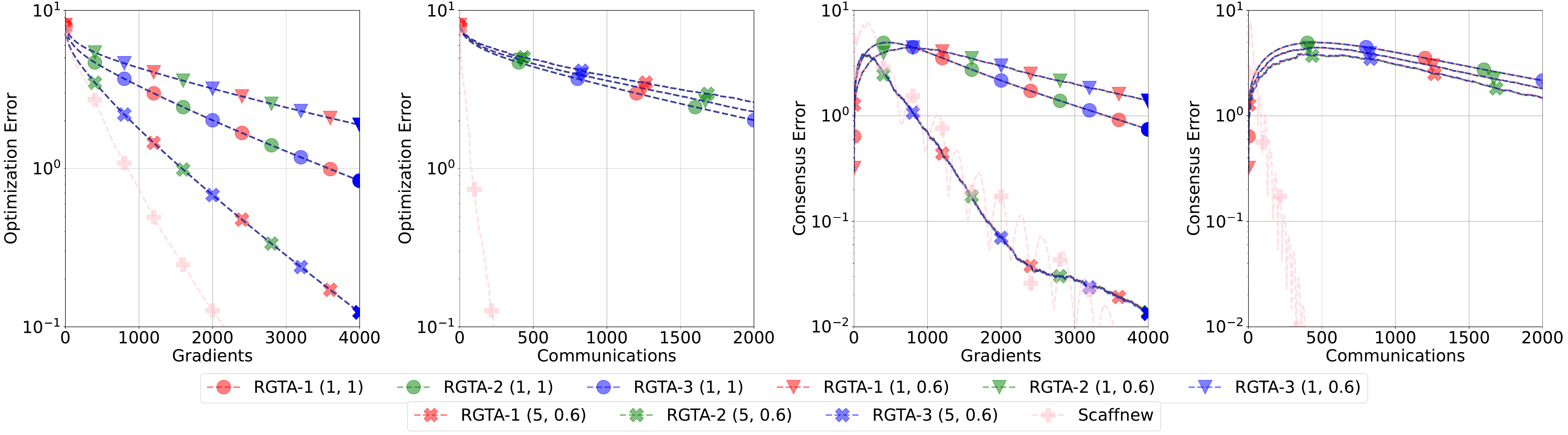} 
    \caption{Line Network, $\beta = 0.999$}
    \label{fig:w8a_line}
\end{subfigure}
\caption{Optimization Error ($\|\xbar_k - x^*\|_2$) and Consensus Error ($\|\xmbf_k - \xbb_k\|_2$) of \texttt{RGTA-1}, \texttt{RGTA-2} and \texttt{RGTA-3} with respect to number of gradient evaluations and communication rounds for binary logistic regression on the w8a dataset ($n = 16$, $d = 300$, $\sum_{i=1}^n n_i = 49,749 $) over; (a) fully connected network ($\beta =  0$); (b) star network ($\beta = 0.95$); and, (c) line network ($\beta = 0.999$).}
\label{fig:w8a_exps}
\end{figure}

\section{Conclusions}\label{sec.conc}

In this paper, we have proposed a flexible gradient tracking algorithmic framework (\texttt{RGTA}). 
The framework allows one to balance the number of communication and computation steps in expectation for solving decentralized optimization problems. As the complexity of these two steps can differ significantly across applications, employing such flexibility can result in improvements for overall practical performance. The framework is endowed with this flexibility via a randomized scheme with two parameters (the number of communication steps and the communication probability). Moreover, the framework recovers popular gradient tracking methods as special cases and allows for a direct theoretical comparison. We have established a linear rate of convergence in expectation for the \texttt{RGTA} framework and showed reduction in communication and computation complexity as a direct result of the flexibility. Finally, we illustrated the benefits of utilizing the flexibility of the \texttt{RGTA} framework empirically on synthetic quadratic problems and logistic regression problems, over several network structures. We compared against popular algorithms from the literature and illustrated the robustness and efficiency of the proposed framework.

\newpage
\bibliographystyle{plain}
\bibliography{references}


\end{document}